\documentclass[10pt,a4paper]{amsart}
\usepackage{amsmath,amssymb,amscd}
\usepackage{tikz}

\newtheorem{theorem}{Theorem}[section]
\newtheorem{corollary}[theorem]{Corollary}
\newtheorem{lemma}[theorem]{Lemma}
{\theoremstyle{definition}
\newtheorem{definition}{Definition}[section]
 
 }
 {\theoremstyle{remark}

}


\newcommand{\fs}{{W}}
\newcommand{\cyl}{\mathfrak C}
\newcommand{\bl}{I}

\newcommand{\dist}{\operatorname{dist}}

\newcommand{\Id}{\operatorname{Id}}

\newcommand{\vect}{\operatorname{span}}
\newcommand{\Hol}{\operatorname{Hol}}
\newcommand{\Lip}{\operatorname{Lip}}
\newcommand{\var}{{\text{var}}}

\renewcommand{\j}{\mathfrak i}
\def\bB{\hat{B}}
\def\<{\left\langle}
\def\>{\right\rangle}
\def\KR{{\text{\textsc{KR}}}}
\def\dif{\text{ d}} 
\def\R{\mathbf R}

\def\car{\mathbf 1}

\def\V{V}
\newcommand{\esp}[1]{\mathbf E\left[#1\right]}

\def\wB{B}

\def\ws{W}
\def\cm{{I_{1,2}}}
\def\fs{{W}}

\def\var{{\text{var}}}
\def\c1var{{C^{1-\text{var}}}}

\def\cA{{\mathcal A}}

\def\Id{\mbox{Id}}
\renewcommand{\j}{\mathfrak i}

\def\dist{\operatorname{dist}}

\def\N{{\mathbf N}}
\def\fs{{W}}

\def\esp#1{\mathbf E\left[#1\right]}

\def\dif{\text{ d}}
\def\R{\mathbf{R}}
\def\car{\mathbf{1}}

\def\KR{{\text{\textsc{kr}}}}

\def\Hol{\mbox{Hol}}
\def\KR{{\text{\tiny{\scshape{KR}}}}}
\def\Lip{\operatorname{Lip}}
\def\Hol{\mbox{Hol}}

\def\dyad{{\mathcal D}}
\def\V{{\mathcal V}}

\def\div{\delta_{\eta,p}}

%
%
\def\<{\left \langle}
  \def\>{\right\rangle}

\def\tS{S}
\def\emb{{\mathfrak e}}
\def\j{\emb}
\def\dyad{\mathcal D}
\def\A{{\mathcal A}}
\def\var#1{\operatorname{Var}\left[#1\right]}
\def\/{\, |\, }

\begin{document}
\title{Donsker's theorem in  Wasserstein-1 distance}
\author{L. Coutin}
\address{Institute  of Mathematics\\
  Universit\'e Toulouse 3\\
  Toulouse, France} \email{laure.coutin@math.univ-toulouse.fr}
\author{L. Decreusefond}
\address{LTCI, T\'l\'ecom Paris, Institut polytechnique de  Paris\\
  Paris, France} \email{Laurent.Decreusefond@mines-telecom.fr}

\thanks{} \thanks{The first  author is partially supported by ANR MESA}
\begin{abstract}
We compute the  Wassertein-1 (or Kolmogorov-Rubinstein) distance between a random
walk in $\R^{d}$ and the Brownian motion. The proof is based on a new estimate
of the Lipschitz modulus  of the solution of the Stein's equation. As an
application, we can evaluate the rate of convergence towards the local time at
$0$ of the Brownian motion.
\end{abstract}
\keywords{Donsker theorem, Malliavin calculus, Stein's method, Wasserstein distance} \subjclass{60F15,60H07,60G15,60G55}
\maketitle{}

\section{Motivations}
\label{sec:motiv}
For a complete, separable metric space $X$,  the
topology of convergence in distribution is metrizable
\cite{DudleyRealanalysisprobability2002} by  considering the
so-called Kolmogorov-Rubinstein or Wasserstein-1 distance:
\begin{equation}\label{eq_core:1}
  \dist_{\KR}(\mu,\nu)=\sup_{F\in \Lip_{1}(X)} \left( \int_{X} F\dif \mu -\int_{X}F\dif \nu \right)
\end{equation}
where
\begin{equation*}
  \Lip_{1}(X)=\left\{ F\, :\, X\to \R, |F(x)-F(y)|\le \dist_{X}(x,y),\ \forall x,y\in X \right\}.
\end{equation*}
The formulation \eqref{eq_core:1} is well
suited to evaluate distance by the Stein's method. When $X=\R$, there is no
particular difficulty to evaluate the K-R distance when $\mu$ is the Gaussian
distribution. When, $X=\R^{d}$, it is only recently (see
\cite{Fang2018,Gallouet2018,Raic2018} and references therein) that some improvement of the
standard Stein's method has been proposed to get the K-R distance to the
Gaussian measure on $\R^{d}$. The bottleneck is the estimate of the Lipschitz modulus  of the second order derivative of the solution of the Stein's
equation when $F$ is only assumed to be Lipschitz continuous. Namely, for $f\, :\, \R^{d}\to \R$, for any $t>0$, consider the function
\begin{equation*}
P_{t}f\, :\,   x\in\R^{d}\longmapsto  \int_{\R^{d}}f(e^{-t}x+\sqrt{1-e^{-2t}}y)\dif \mu_{d}(y)
\end{equation*}
where $\mu_{d}$ is the standard Gaussian measure on $\R^{d}$.  In dimension $1$, the Stein's equation reads as
  \begin{equation*}
    -xh(x)+h'(x)=f(x)-\int_{\R}f\dif \mu_{1},
  \end{equation*}
  so that
  \begin{equation}\label{eq_core:16}
    h(x)=\int_{0}^{\infty }P_{t}f(x)\dif t
  \end{equation}
  and the subsequent computations require to evaluate only the Lipschitz modulus  of $h'$. 
For $f\in L^{1}(\mu)$,
it is classical to see that $P_{t}f$ is infinitely  differentiable and that
\begin{equation}
  \label{eq_core:2}
  (P_{t}f)^{(k)}(x)=\left( \frac{e^{-t}}{\sqrt{1-e^{-2t}}} \right)^{k}\int_{\R^{d}}f(e^{-t}x+\sqrt{1-e^{-2t}}y)H_{k}(y)\dif \mu_{d}(y)
  \end{equation}
  where $H_{k}$ is the $k$-th Hermite polynomial. On the other hand, if $f$ is
  $k$-times differentiable, we have
  \begin{equation}\label{eq_core:3}
     (P_{t}f)^{(k)}=e^{-kt}P_{t}(f^{(k)}).
  \end{equation}
 According to
  \eqref{eq_core:2}, we get
  \begin{equation*}
    h'(x)=\int_{0}^{\infty } \frac{e^{-t}}{\sqrt{1-e^{-2t}}} \int_{\R^{d}}f(e^{-t}x+\sqrt{1-e^{-2t}}y)y\dif \mu_{1}(y)\dif t.
  \end{equation*}
  It is apparent that the Lipschitz  modulus  of $h'$ simply depends on the
  Lipschitz modulus  of $f$. However, in higher dimension, the Stein's equation
  becomes
   \begin{equation}\label{eq_core:17}
    -x.\nabla h(x)+\Delta h(x)=f(x)-\int_{\R^{d}}f\dif \mu_{d},
  \end{equation}
  whose solution is formally given by \eqref{eq_core:16}.   
The form of \eqref{eq_core:17} entails that we need to estimate the Lipschitz modulus  of $\Delta h$, which requires to
  use \eqref{eq_core:2} for $k=2$. Unfortunately, we have to realize that 
  \begin{equation*}
    \left( \frac{e^{-t}}{\sqrt{1-e^{-2t}}} \right)^{k} \notin L^{1}([0,+\infty);\dif t).
  \end{equation*}
  Hence, until the very recent papers \cite{Fang2018,Raic2018}, the strategy was  to assume that
  $\nabla f$ is Lipschitz, apply once \eqref{eq_core:3} to compute the first derivative
  of $P_{t}f$ and then apply \eqref{eq_core:2} to this expression:
  \begin{equation*}
    \Delta h(x)=\int_{0}^{\infty } \frac{e^{-t}}{\sqrt{1-e^{-2t}}} \int_{\R^{d}}\nabla f(e^{-t}x+\sqrt{1-e^{-2t}}y).y\dif \mu_{d}(y)\dif t.
  \end{equation*}
This means that instead of computing the supremum in the right-hand-side of \eqref{eq_core:1}, over Lipschitz functions, it
is computed over functions whose first derivative is Lipschitz. This also
defines a distance, which  does not
change the induced topology but the accuracy of the bound is degraded. 

  In infinite dimension, a new problem arises which is best explained by going
  back to the roots of the Stein's method in dimension~$1$. Consider that we
  want to estimate the K-R distance in the standard Central Limit Theorem. Let
  $(X_{n},\, n\ge 1)$ be a sequence of independent, identically distributed
  random variables with $\esp{X}=0$ and $\esp{X^{2}}=1$. Let
  $T_{n}=n^{-1/2}\sum_{j=1}^{n}X_{j}$. The Stein-Dirichlet representation
  formula \cite{DecreusefondSteinDirichletMalliavinmethod2015} states that
  \begin{equation}
    \label{eq:SteinDirichlet}
    \esp{f(T_{n})}-\int_{\R}f\dif \mu_{1}=\esp{\int_{0}^{\infty} LP_{t}f(T_{n})\dif t}
  \end{equation}
  where
  \begin{equation*}
    Lf(x)=-xf(x)+f'(x)=L_{1}f(x)+L_{2}f(x),
  \end{equation*}
  with obvious notations. Now,
  \begin{equation*}
    L_{1}P_{t}f(T_{n})=-T_{n}(P_{t}f)'(T_{n})=-\frac{1}{\sqrt{n}}\sum_{j=1}^{n} X_{j}(P_{t}f)'(T_{n}).
  \end{equation*}
The trick, which amounts to an integration by parts for a Malliavin structure on
independent random variables (see \cite{Decreusefond_2018}), is to write
\begin{equation*}
  \esp{X_{j}(P_{t}f)'(T_{n})}= \esp{X_{j}\Bigl( (P_{t}f)'(T_{n})-(P_{t}f)'(T_{n}-X_{j}/\sqrt{n}) \Bigr)}
\end{equation*}
in view of the independence of the random variables. Then, we use the
fundamental theorem  of calculus
 in this expression around the point $T_{n}^{\neg
  j}=T_{n}-X_{j}/\sqrt{n}$:  
\begin{multline*}
  \esp{X_{j}\Bigl( (P_{t}f)'(T_{n})-(P_{t}f)'(T_{n}-X_{j}/\sqrt{n}) \Bigr)}\\=\frac{1}{\sqrt{n}}\int_{0}^{1} \esp{X_{j}^{2} (P_{t}f)''(T_{n}+r X_{j}/\sqrt{n}) \Bigr) }\dif r.
\end{multline*}
Since,
\begin{equation*}
  \int_{0}^{1} \esp{X_{j}^{2} (P_{t}f)''(T_{n}^{\neg
  j})}\dif r=\esp{(P_{t}f)''(T_{n}^{\neg
  j})},
\end{equation*}
we get
\begin{multline}\label{eq_core:4}
  LP_{t}f(T_{n})\\=-\frac{1}{n}\sum_{j=1}^{n } \int_{0}^{1} \esp{X_{j}^{2}
    \Bigr((P_{t}f)''(T_{n}^{\neg
  j}+r X_{j}/\sqrt{n})-(P_{t}f)''(T_{n}^{\neg
  j})
    \Bigr) }\dif r\\
  +\frac{1}{n}\sum_{j=1}^{n } \esp{(P_{t}f)''(T_{n}^{\neg
  j}) -(P_{t}f)''(T_{n})}.
\end{multline}
This formula confirms that the crux of the matter is now to estimate
uniformly the Lipschitz  modulus  of $(P_{t}f)''$. It also shows how we get the
order of convergence. We have one occurrence of $n^{-1/2}$ in the definition of
$T_{n}$, which appears in the expression of $L_{1}$. The same factor appears a
second time when we proceed to the Taylor expansion and then, it will appear a
third time when we plug~\eqref{eq_core:2} into~\eqref{eq_core:4}. This means
that we have a factor $n^{-3/2}$ which is summed up $n$ times, hence the rate of
convergence which is known to be  $n^{-1/2}$.

Now, if we are interested in the Donsker theorem, the process whose limit we
would like to assess is
\begin{equation*}
  \tS_{n}(t)=\sum_{j=1}^{n} X_{j} h_{j}^{n}(t)
\end{equation*}
where
\begin{equation*}
  h_{j}^{n}(t)=\sqrt{n}\int_{0}^{t} \car_{[j/n,(j+1)/n)}(s)\dif s.
\end{equation*}
For reasons that will be explained below, the analog of the second order
derivatives will involve
\begin{equation}\label{eq_core:5}
  \<
    h_{j}^{\otimes 2},   \nabla^{(2)}(P_{t}f)(S_{n}^{\neg
  j}S_{n}+r X_{j}/\sqrt{n})-\nabla^{(2)}(P_{t}f)(S_{n}^{\neg
  j})
  \>_{I_{1,2}^{\otimes 2}}
\end{equation}
where $\nabla$ is the Malliavin derivative,  $I_{1,2}$ is the Cameron-Martin space
\begin{equation*}
  I_{1,2}=\left\{f, \exists ! \dot f \in L^{2}([0,1],\dif t) \text{ with } f(t)=\int_{0}^{t}\dot f(s)\dif s\right\}
\end{equation*}
and
\begin{equation*}
  \|f\|_{I_{1,2}}=\|\dot f\|_{L^{2}}.
\end{equation*} Recall that in the context of Malliavin calculus, this space is
identified to its dual which means that the dual of $L^{2}$ is not itself.
The difficulty is then that we do not have a $n^{-1/2}$ factor in the definition
of $\tS_{n}$ and it is easily seen that $\|h_{j}^{n}\|_{I_{1,2}}=1$, hence no
multiplicative factor will pop up in~\eqref{eq_core:5}. In
\cite{coutin_convergence_2017}, we bypassed this difficulty by assuming enough
regularity of $f$ so that $\nabla^{(2)}P_{t}f$ belong to the dual of $L^{2}$.
Then, in the estimate of terms as those appearing in \eqref{eq_core:5}, it is  the
$L^{2}$-norm of $h_{j}^{n}$ which  appears and it turns out that
$\|h_{j}^{n}\|_{L^{2}}\le c\,n^{-1/2}$, hence the presence of a factor $n^{-1}$, which
saves the proof.

The goal of this paper is to weaken the hypothesis on $f$ to be able to
upper-bound the true K-R distance between the distribution of $\tS_{n}$ and
the distribution of a Brownian motion, that is
\begin{equation*}
  \sup_{f\in \Lip_{1}(X)}\esp{f(\tS_{n})}-\esp{f(B)}.
\end{equation*}
The space $X$ is a Banach space we can choose arbitrarily as far as it
can be equipped with the structure of an  abstract Wiener space and it
contains the sample paths of $\tS_{n}$ and $B$.

The main technical  result of this article is Theorem~\ref{thm:majoModulusOfContinuity} which
gives a new estimate of the Lipschitz modulus  of $\nabla^{(2)}P_{t}f$ for
$t>0$. The main idea is to introduce a hierarchy  of approximations. There is a
first scale induced by the time discretization coming from the definition of
$\tS_{n}$. Then,  we consider a coarser  discretization onto which we project our
approximations in order to benefit from the averaging effect of the ordinary
CLT. It turns out that the optimal ratio is obtained when the mesh of the
coarser subdivision is roughly the cubic root of the mesh of the reference
partition. Moreover, after \cite{coutin_steins_2013} and
\cite{coutin_convergence_2017}, we are convinced that it is simpler and as
efficient to stick to finite dimension as long as possible. For, we consider the
affine interpolation of the Brownian motion as an intermediary process. The  distance
between the Brownian sample-paths and their affine interpolation is well known.
This reduces the problem to estimate the distance between $\tS_{n}$ and the affine
interpolation of~$B$, a task which can be handled by the Stein's method. It turns out
that the bottleneck is in fact the rate of convergence of the Brownian
interpolation to the Brownian motion.

This paper is organized as follows. In Section~\ref{sec:prelim}, we show how to
view  fractional Sobolev spaces as  Wiener spaces. In
Section~\ref{sec:donsk-theor-ws_eta}, we explain the line of thoughts we used.
The proofs are given in Section~\ref{sec:proofs}.

\section{Preliminaries}
\label{sec:prelim}

\subsection{Fractional Sobolev spaces}

\label{sec:fract-sobol-spac}
As in \cite{decreusefond_stochastic_2005,friz_multidimensional_2010}, we consider the fractional
Sobolev spaces $\fs_{\eta,p}$ defined for $\eta \in (0,1)$ and $p\ge 1$
as the the closure of ${\mathcal
  C}^1$ functions with respect to the norm
\begin{equation*}
  \|f\|_{\eta,p}^p=\int_0^1 |f(t)|^p \dif t + \iint_{[0,1]^2}
  \frac{|f(t)-f(s)|^p}{|t-s|^{1+p\eta}}\dif t\dif s.
\end{equation*}
For $\eta=1$, $\fs_{1,p}$ is  the completion of $\mathcal C^1$ for
the norm:
\begin{equation*}
   \|f\|_{1,p}^p=\int_0^1 |f(t)|^p \dif t + \int_0^1 |f^\prime(t)|^p \dif t.
\end{equation*}
They are known to be Banach spaces and to satisfy the Sobolev embeddings \cite{adams_sobolev_2003,feyel_fractional_1999}:
\begin{equation*}
  \fs_{\eta,p}\subset \Hol(\eta-1/p) \text{ for } \eta-1/p>0
\end{equation*}
and
\begin{equation*}
   \fs_{\eta,p}\subset  \fs_{\gamma,q}\text{ for } 1\ge \eta\ge \gamma  \text{
     and } \eta-1/p\ge \gamma-1/q.
 \end{equation*}
 As a consequence, since $\fs_{1,p}$ is separable (see
 \cite{brezis_analyse_1987}), so does  $\fs_{\eta,p}$. We need to compute the
 $\fs_{\eta,p}$ norm of primitive of step functions.
\begin{lemma}
 \label{lem:normeHDansDual}
 Let $0\le s_{1} < s_{2}\le 1$ and consider
 \begin{equation*}
   h_{s_{1},s_{2}}(t)=\int_{0}^{t} \car_{[s_{1},s_{2}]}(r)\dif r.
 \end{equation*}
 There exists $c>0$ such that for any $s_{1},s_{2}$, we
  have
  \begin{equation}
    \label{eq_donsker:3}
     \|h_{s_{1},s_{2}}\|_{{\eta,p}} \le c\, |s_{2}-s_{1}|^{1/2-\eta}.
  \end{equation}
\end{lemma}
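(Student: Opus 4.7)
The proof is a direct estimation of each term in the fractional Sobolev norm, and the only tools needed are two elementary pointwise bounds on $h_{s_{1},s_{2}}$. Set $\delta=s_{2}-s_{1}\in[0,1]$ for brevity.

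First I would record the bounds. Since $h_{s_{1},s_{2}}$ is non-decreasing with $h_{s_{1},s_{2}}(t)=0$ on $[0,s_{1}]$ and $h_{s_{1},s_{2}}(t)=\delta$ on $[s_{2},1]$, one has $\|h_{s_{1},s_{2}}\|_{\infty}\le \delta$. Moreover, for any $s,t\in[0,1]$,
\begin{equation*}
  h_{s_{1},s_{2}}(t)-h_{s_{1},s_{2}}(s)=\int_{s}^{t}\car_{[s_{1},s_{2}]}(r)\dif r,
\end{equation*}
which is controlled both by the length of the interval of integration and by $\delta$, giving $|h_{s_{1},s_{2}}(t)-h_{s_{1},s_{2}}(s)|\le \min(\delta,|t-s|)$. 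These are the only facts about $h_{s_{1},s_{2}}$ used in the rest of the argument.

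The first term in $\|h_{s_{1},s_{2}}\|_{\eta,p}^{p}$ is handled at once: $\int_{0}^{1}|h_{s_{1},s_{2}}(t)|^{p}\dif t\le \delta^{p}$. For the Gagliardo double integral, I symmetrize in $(s,t)$ and perform the change of variable $u=t-s$ to reduce the estimate to
\begin{equation*}
  \iint_{[0,1]^{2}}\frac{|h_{s_{1},s_{2}}(t)-h_{s_{1},s_{2}}(s)|^{p}}{|t-s|^{1+p\eta}}\dif t\dif s\le 2\int_{0}^{1}\frac{\min(\delta,u)^{p}}{u^{1+p\eta}}\dif u.
\end{equation*}
Splitting the remaining integral at $u=\delta$ yields $\int_{0}^{\delta}u^{p(1-\eta)-1}\dif u+\delta^{p}\int_{\delta}^{1}u^{-1-p\eta}\dif u$. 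For $\eta\in(0,1)$ both pieces are absolutely convergent and each evaluates to a multiple of $\delta^{p(1-\eta)}$, so the double integral is bounded by $c_{\eta,p}\delta^{p(1-\eta)}$. The case $\eta=1$ is even simpler, since $h_{s_{1},s_{2}}^{\prime}=\car_{[s_{1},s_{2}]}$ gives $\int_{0}^{1}|h_{s_{1},s_{2}}^{\prime}|^{p}\dif t=\delta$.

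Combining the two terms yields $\|h_{s_{1},s_{2}}\|_{\eta,p}^{p}\le c_{\eta,p}\delta^{p(1-\eta)}$, so taking a $p$-th root gives $\|h_{s_{1},s_{2}}\|_{\eta,p}\le c\,\delta^{1-\eta}$. Since $\delta\le 1$ and $1-\eta\ge 1/2-\eta$, this is stronger than the claim $c\,\delta^{1/2-\eta}$, so \eqref{eq_donsker:3} follows. There is no real obstacle: the only mildly delicate point is keeping track of how the constant $c_{\eta,p}$ depends on $\eta$, but this is uniform on any compact sub-interval of $(0,1)$, which is all that will be needed in the sequel.
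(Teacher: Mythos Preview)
Your proof is correct and follows exactly the approach sketched in the paper: the key input is the pointwise bound $|h_{s_{1},s_{2}}(t)-h_{s_{1},s_{2}}(s)|\le |t-s|\wedge(s_{2}-s_{1})$, and you simply carry out in full the computation the paper leaves implicit. As you note, the argument actually yields the sharper exponent $1-\eta$ in place of $1/2-\eta$.
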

\begin{proof}
     Remark that for any $s,t\in [0,1]$,
  \begin{equation*}
    \left| h_{s_{1},s_{2}} (t)-h_{s_{1},s_{2}}(s) \right|\le |t-s|\wedge (s_{2}-s_{1}).
  \end{equation*}
  The result then follows from the definition of the $\fs_{\eta,p}$
    norm.
\end{proof}
We denote by $\ws_{0,\infty}$ the space of continuous (hence bounded) functions
on $[0,1]$ equipped with the uniform norm.
\subsection{Fractional spaces $\ws_{\eta,p}$ as Wiener spaces}
\label{sec:gelfand-triplet-1}

 Let
  \begin{equation*}
    \Lambda=\{(\eta,p)\in \R^{+}\times\R^{+}, 0< \eta-1/p<1/2\}\cup\{(0,\infty)\}.
  \end{equation*}
In what follows, we always choose $\eta$ and $p$ in $\Lambda$. 
Consider $(Z_{n},\, n\ge 1)$ a sequence of independent, standard Gaussian random
variables and let $(z_{n},\, n\ge 1)$ be a complete orthonormal basis of
$I_{1,2}$. Then, we know from \cite{ito_convergence_1968} that 
\begin{equation}\label{eq_core:6}
  \sum_{n=1}^{N} Z_{n}\, z_{n} \xrightarrow{N\to \infty} B:=\sum_{n=1}^{\infty} Z_{n}\, z_{n} \text{ in } W_{\eta,p} \text{ with probability } 1,
\end{equation}
where $B$ is a Brownian motion. We clearly have the diagram
\begin{equation}\label{eq_core:8}
  W_{\eta,p}^{*}\xrightarrow{\emb_{\eta,p}^{*}} (I_{1,2})^{*}\simeq I_{1,2}\xrightarrow{\emb_{\eta,p}}  W_{\eta,p},
\end{equation}
where $\emb_{\eta,p}$ is the embedding from $I_{1,2}$ into $W_{\eta,p}$. 
The space $I_{1,2}$ is dense in $W_{\eta,p}$ since polynomials do belong to
$I_{1,2}$. Moreover, Eqn.~\eqref{eq_core:6} and the Parseval identity entail
that for any $z\in \ws^{*}$,
  \begin{equation}\label{eq_core:7}
\esp{e^{i\< z,B\>_{\ws^{*}_{\eta,p},\ws_{\eta,p}}}}=\exp\left(-\frac12 \|\emb_{\eta,p}^* (z)\|_{I_{1,2}}^{2}\right).
\end{equation}
We denote by $\mu_{\eta,p}$ the law of $B$ on $\ws_{\eta,p}$. Then,
 the diagram \eqref{eq_core:8} and the identity~\eqref{eq_core:7} mean that
$(I_{1,2},\ws_{\eta,p},\mu_{\eta,p})$ is {a Wiener space}.
\begin{definition}[Wiener integral]
  The Wiener integral, denoted as $\delta_{\eta,p}$, is the isometric extension
  of the map
  \begin{align*}
   \delta_{\eta,p}\, :\,  \j_{\eta,p}^*(\fs_{\eta,p}^*)\subset\bl_{1,2}&\longrightarrow  L^2(\mu_{\eta,p})\\
   \j_{\eta,p}^*(\eta)  &\longmapsto \<\eta,\, y\>_{\fs_{\eta,p}^*,\fs_{\eta,p}}. 
  \end{align*}
\end{definition}
This means that if $h=\lim_{n\to \infty } \j_{\eta,p}^{*}(\eta_{n})$ in
$\bl_{1,2}$,
\begin{equation*}
  \delta_{\eta,p} h(y)=\lim_{n\to \infty} \<\eta_{n},\, y\>_{\fs_{\eta,p}^*,\fs_{\eta,p}} \text{ in }L^{2}(\mu_{\eta,p}).
\end{equation*}
\begin{definition}[Ornstein-Uhlenbeck semi-group]
  For any Lipschitz function on $\ws_{\eta,p}$, for any $\tau \ge 0$,
  \begin{equation*}
    P_\tau f(x)=\int_{\ws_{\eta,p}} f(e^{-\tau}x+\beta_\tau y)\dif\mu_{\eta,p}(y)
  \end{equation*}
where $\beta_\tau=\sqrt{1-e^{-2\tau}}$.
\end{definition}
The dominated convergence theorem entails that $P_\tau$ is ergodic:
For any $x\in \ws_{\eta,p}$, with probability~$1$, 
\begin{equation*}
  P_\tau f(x)\xrightarrow{\tau\to\infty} \int_{\ws_{\eta,p}} f\dif\mu_{\eta,p}.
\end{equation*}
Moreover, the invariance by rotation of Gaussian measures implies that
\begin{equation*}
  \int_{\ws_{\eta,p}} P_\tau f(x)\dif\mu_{\eta,p}(x)=
  \int_{\ws_{\eta,p}}f\dif\mu_{\eta,p}\text{, for any $\tau \ge 0$.}
\end{equation*}
Otherwise stated, the Gaussian measure on $\fs_{\eta,p}$ is the invariant and stationary
measure of the semi-group $P=(P_\tau, \, \tau \ge 0)$. For details on the
Malliavin gradient, we refer to \cite{Nualart1995b,Ustunel2010}.
\begin{definition}
  \label{def_donsker_final:2} Let $X$ be a Banach space. A function $f\, :\, \ws_{\eta,p}\to
  X$ is said to be cylindrical if it is of the form
  \begin{equation*}
    f(y)=\sum_{j=1}^k f_j(\delta_{\eta,p} h_1(y),\cdots,\delta_{\eta,p} h_k(y))\, x_j
  \end{equation*}
where for any $j\in \{1,\cdots,k\}$, $f_j$ belongs to the Schwartz space on $\R^k$, $(h_1,\cdots, h_k)$ are
elements of $\bl_{1,2}$ and $(x_1,\cdots,x_k)$ belong to $X$. The set of such
functions is denoted by $\cyl(X)$.

For $h\in \bl_{1,2}$,
\begin{equation*}
  \< \nabla f, \, h\>_{ \bl_{1,2}}=\sum_{j=1}^k\sum_{l=1}^{k} \partial_lf(\delta_{\eta,p} h_1(y),\cdots,\delta_{\eta,p} h_k(y))\, \<h_l,\, h\>_{ \bl_{1,2}}\ x_j,
\end{equation*}
which is equivalent to say
\begin{equation*}
  \nabla f = \sum_{j,l=1}^k \partial_jf(\delta_{\eta,p} h_1(y),\cdots,\delta_{\eta,p} h_k(y))\, h_l\otimes\ x_j.
\end{equation*}
 \end{definition}
 It is proved in \cite[Theorem 4.8]{shih_steins_2011} that
 \begin{theorem}
   For $f\in \Lip_{1}(\ws_{\eta,p})$, for any $t>0$, for any $x\in \ws_{\eta,p}$
   \begin{equation}
     \label{eq:dSurDtPtF}
     \frac{d}{dt}P_{t}f(x)=-\<x,\, \nabla(P_{t}f)(x)\>_{\ws_{\eta,p},\ws_{\eta,p}^{*}}+\sum_{i=1}^{\infty} \<\nabla^{(2)}P_{t}f(x),\, h_{i}\otimes h_{i}\>_{I_{1,2}}
   \end{equation}
   where $(h_{n}, \, n\ge 1)$ is complete orthonormal basis of $H$.
 \end{theorem}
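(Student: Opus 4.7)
The plan is to identify the right-hand side with the action of the formal Ornstein-Uhlenbeck generator $L = -\langle x, \nabla\rangle + \operatorname{trace}\nabla^{(2)}$ on $P_t f$, so that the claim reduces to the generator identity $\partial_t P_t f(x) = L P_t f(x)$. I would prove this in two stages: first for smooth cylindrical $f$, then by approximation for general $f \in \Lip_1(\fs_{\eta,p})$.

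For $f(x) = \phi(\delta_{\eta,p} h_1(x), \ldots, \delta_{\eta,p} h_n(x))$ with $\phi \in \mathcal{S}(\R^n)$ and $(h_i)_{i=1}^n$ orthonormal in $I_{1,2}$, the rotational invariance of $\mu_{\eta,p}$ ensures that $P_t f$ is cylindrical on the same vectors with $\phi$ replaced by the standard Mehler semigroup $Q_t \phi$ on $\R^n$. The finite-dimensional identity $\partial_t Q_t \phi = -y\cdot\nabla_y Q_t\phi + \Delta_y Q_t\phi$ is a direct computation: differentiate the Mehler kernel in $t$ and apply Gaussian integration by parts. Translating back to $\fs_{\eta,p}$ yields the two claimed terms, with only the first $n$ diagonal entries contributing to the trace sum.

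For general Lipschitz $f$ I would exploit the smoothing effect of $P_t$ together with Cameron's formula applied to the Mehler representation, which gives the integration-by-parts expressions
\begin{equation*}
\langle \nabla P_t f(x), h \rangle_{I_{1,2}} = \frac{e^{-t}}{\beta_t} \int f(e^{-t}x + \beta_t y)\, \delta_{\eta,p} h(y)\, d\mu_{\eta,p}(y),
\end{equation*}
\begin{equation*}
\langle \nabla^{(2)} P_t f(x), h \otimes k \rangle = \frac{e^{-2t}}{\beta_t^{2}} \int f(e^{-t}x + \beta_t y)\bigl(\delta_{\eta,p} h(y)\,\delta_{\eta,p} k(y) - \langle h, k\rangle_{I_{1,2}}\bigr) d\mu_{\eta,p}(y),
\end{equation*}
valid for merely Lipschitz $f$. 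I would then approximate $f$ by $f_n := P_{1/n} f$: by Wiener chaos expansion and truncation, $f_n$ is a limit of smooth cylindrical functions in every Malliavin-Sobolev norm, so the identity holds for each $f_n$. The semigroup identity $P_t f_n = P_{t+1/n} f$, combined with the continuity at $s = t$ of $s \mapsto P_s f(x)$, of $\nabla P_s f(x)$, and of the trace sum $\sum_i \langle \nabla^{(2)} P_s f(x), h_i \otimes h_i\rangle$, then delivers the identity for $f$.

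The technical heart of the argument is the convergence of the diagonal series $\sum_i \langle \nabla^{(2)} P_t f(x), h_i \otimes h_i\rangle$ when $f$ is only Lipschitz: the pointwise series of Hermite polynomials in the integrand diverges almost surely, so summation must take place after integration. Combining the integration-by-parts representation with the $L^2(\mu_{\eta,p})$-orthogonality of the $H_2(\delta_{\eta,p} h_i)$ and the Lipschitz control on $f$ yields a trace-norm bound on $\nabla^{(2)} P_t f(x)$ of order $e^{-2t}\beta_t^{-2}\Lip(f)$. This estimate, which also applies with $f_n$ in place of $f$, provides the uniform-in-$n$ control needed to close the limiting procedure and extend the identity to all of $\Lip_1(\fs_{\eta,p})$.
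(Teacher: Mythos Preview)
The paper does not give its own proof of this theorem; it quotes it from Shih~\cite{shih_steins_2011}, Theorem~4.8, and immediately remarks that the nontrivial content lies in making the right-hand side meaningful: one must show that $\nabla P_t f(x)$ actually lies in $\fs_{\eta,p}^{*}$ (so that the pairing with a generic $x\in\fs_{\eta,p}$ is defined) and that $\nabla^{(2)}P_t f(x)$ is trace-class on $I_{1,2}$. The paper then notes it only needs a finite-dimensional version, where these issues disappear.

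Your proposal is structured sensibly, and the reduction to the finite-dimensional Mehler identity for cylindrical $f$ is fine. But the approximation step has a genuine gap precisely at the two points the paper flags. First, your Cameron--Martin formula for $\langle\nabla P_t f(x),h\rangle_{I_{1,2}}$ is an $I_{1,2}$-derivative; nothing in your argument shows this linear functional extends continuously from $I_{1,2}$ to $\fs_{\eta,p}$, so the pairing $\langle x,\nabla P_t f(x)\rangle_{\fs_{\eta,p},\fs_{\eta,p}^*}$ remains undefined for $x\notin I_{1,2}$. Second, and more seriously, your claimed trace-norm bound does not follow from the ingredients you list. The $L^{2}(\mu_{\eta,p})$-orthogonality of the family $H_{2}(\delta_{\eta,p}h_{i})$, combined with the Lipschitz bound on $y\mapsto f(e^{-t}x+\beta_t y)$, gives via Bessel's inequality only
\[
\sum_{i}\Bigl|\bigl\langle \nabla^{(2)}P_t f(x),\,h_i\otimes h_i\bigr\rangle_{I_{1,2}^{\otimes 2}}\Bigr|^{2}<\infty,
\]
i.e.\ a Hilbert--Schmidt type estimate, not the absolute summability you assert. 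Since $\sum_i H_2(\delta_{\eta,p}h_i(y))$ diverges a.s., orthogonality after integration cannot by itself upgrade square-summability to summability of the diagonal.

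The actual argument in Shih uses the abstract Wiener space structure more substantially: one first establishes that $\nabla P_{s}f(x)\in\fs_{\eta,p}^{*}$ (via a.e.\ Gâteaux differentiability of $\fs_{\eta,p}$-Lipschitz maps and averaging), and then exploits the factorisation $P_t=P_{t/2}P_{t/2}$ together with this improved regularity to show that $\nabla^{(2)}P_t f(x)$ factors through the embedding $I_{1,2}\hookrightarrow\fs_{\eta,p}$ on both sides, which is what forces the trace to be finite. Your scheme $f_n=P_{1/n}f$ is a reasonable vehicle, but it cannot close without these estimates.
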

 Note that a non trivial part of this theorem is to prove that the terms are
 meaningful: that $\nabla P_{t}f$ has values in $\ws_{\eta,p}^{*}$ instead of
 $I_{1,2}$ and that $\nabla^{(2)}P_{t}f(x)$ is trace-class.
 Actually, we only need a finite dimensional version of this identity in which
 all these difficulties do not appear. 

\section{Donsker's theorem in $\ws_{\eta,p}$}

\label{sec:donsk-theor-ws_eta}

For $m\ge 1$, let $\dyad^{m}=\{i/m,\, i=0,\cdots, m\}$, the regular
subdivision of the interval $[0,1]$. Let
\begin{equation*}
  \A^{m}=\{1,\cdots,d\}\times \{0,\cdots,m-1\}
\end{equation*}
and for $a=(a_{1},\, a_{2})\in \A^{m}$
\begin{equation*}
  h_{a}^{m}(t)=\sqrt{m}\,\int_{0}^{t}\car_{[a_{2},a_{2}+1/m)}(s)\dif s\  e_{a_{1}}.
\end{equation*}
Consider
\begin{equation*}
  \tS^{m}=\sum_{a\in \A^{m}} X_{a}\, h_{a}^{m}
\end{equation*}
where $(X_{a},\, a\in \A^{m})$ is a family of independent identically
distributed, $\R^{d}$-valued, random variables. We denote by $X$  a random
variable which has their common distribution. Moreover, we assume that
$\esp{X}=0$ and $\esp{\|X\|_{\R^{d}}^{2}}=1$. 
Remark that $(h_{a}^{m},a\in \A^{m})$ is an orthonormal family in $\R^{d}\otimes
I_{1,2}:=I_{1,2}^{d}$. Let
\begin{equation*}
  \V^{m}=\vect(h_{a}^{m},\,a\in \A^{m})\subset I_{1,2}^{d}
.
\end{equation*}
For any $m>0$, the map $\pi^{m}$ is the orthogonal projection from
$H:=I_{1,2}^{d}$
onto $\V^{m}$.
Let $0<N<m$, for $f\in \Lip_{1}(\ws_{\eta,p})$, we write
\begin{align} \label{dec-A}
 \esp{f(\tS^{m})} - \esp{f(B)} =\sum_{i=1}^3 A_i
\end{align}
where 
\begin{align*}
A_{1}&=\esp{f(\tS^{m})}-\esp{f( \pi^{N}(\tS^{m})}\\
A_2&= \esp{f\circ \pi^{N}(\tS^{m})}-\esp{f\circ \pi^N(B^{m})},\\
A_3&=\esp{f\circ \pi^N(B^{m})}-\esp{f(B)},
\end{align*}
where $B^{m}$ is the affine interpolation of the Brownian motion:
\begin{equation*}
  B^{m}(t)=\sum_{a\in\A^{m}}\sqrt{m}\ \bigl(B_{a_{1}}(a_{2}+\frac1{m})-B_{a_{1}}(a_{2})\bigr) \ h_{a}^{m}(t).
\end{equation*}
The two terms $A_{1}$ and $A_{3}$ are of the same nature: We have to compare two
processes which live on the same probability space. Since $f$ is Lipschitz, we
can proceed by comparison of their sample-paths. The term $A_{2}$ is different
as the two processes involved live on different probability spaces. This is for
this term that the Stein's method will be used.

We know from \cite{friz_multidimensional_2010} that
\begin{theorem}
  For any $(\eta,p)\in \Lambda,$ there exists $c>0$ such that 
  \begin{equation}\label{eq_core:10}
  \sup_{N} N^{1/2-\eta} \,\esp{\|B^{N}-B\|_{{\eta,p}}^{p}}^{1/p}\le c.
  \end{equation}
\end{theorem}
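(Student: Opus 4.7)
Set $g_{N} := B^{N} - B$. The proof rests on three structural properties of $g_{N}$: it vanishes at every grid point $k/N$; its restrictions to distinct mesh intervals $[k/N, (k+1)/N]$ are mutually independent; and by Brownian scaling each restriction equals in law $N^{-1/2}$ times a standard Brownian bridge on $[0,1]$. Combined with the Gaussian moment bound $\esp{|Z|^{p}}^{1/p} \le c_{p}\sqrt{\operatorname{Var}(Z)}$, this yields, for every $p \ge 1$,
\begin{equation*}
\esp{|g_{N}(t)|^{p}}^{1/p} \le c_{p}\, N^{-1/2}
\quad\text{and}\quad
\esp{|g_{N}(t) - g_{N}(s)|^{p}}^{1/p} \le c_{p}\, |t-s|^{1/2},
\end{equation*}
the second estimate holding when $s,t$ lie in a common mesh interval. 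This second bound extends to arbitrary pairs by inserting the vanishing values $g_{N}(k/N) = 0$ at intermediate grid points and applying the triangle inequality, producing the unified estimate
\begin{equation*}
\esp{|g_{N}(t) - g_{N}(s)|^{p}}^{1/p} \le c_{p}\, \min\bigl(|t-s|^{1/2},\, N^{-1/2}\bigr), \qquad s,t \in [0,1].
\end{equation*}

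Next I expand the expected $\fs_{\eta,p}$-norm using Tonelli:
\begin{equation*}
\esp{\|g_{N}\|_{\eta,p}^{p}} = \int_{0}^{1} \esp{|g_{N}(t)|^{p}}\,\dif t + \iint_{[0,1]^{2}} \frac{\esp{|g_{N}(t) - g_{N}(s)|^{p}}}{|t-s|^{1+p\eta}}\,\dif t\,\dif s.
\end{equation*}
The $L^{p}$ term is of order $N^{-p/2}$, hence absorbed by the target $N^{-p(1/2-\eta)}$. Into the double integral I insert the unified moment bound and split the integration domain at the scale $|t-s| = 1/N$. On the region $|t-s| \le 1/N$ the integrand is bounded by $c_{p}\,|t-s|^{p(1/2-\eta)-1}$, and its two-dimensional integral is of order $N^{-p(1/2-\eta)}$, provided $\eta < 1/2$. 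On $|t-s| > 1/N$ the integrand is bounded by $c_{p}\,N^{-p/2}\,|t-s|^{-1-p\eta}$, whose integral is of order $N^{-p/2} \cdot N^{p\eta} = N^{-p(1/2-\eta)}$, provided $\eta > 0$. Both regions contribute the same order $N^{-p(1/2-\eta)}$; summing and taking the $p$-th root yields $\esp{\|g_{N}\|_{\eta,p}^{p}}^{1/p} \le c\, N^{-(1/2-\eta)}$, which is the claim.

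The main technical point is the matching of the two moment regimes precisely at the mesh scale $|t-s| \asymp 1/N$: below it the Brownian-bridge Hölder-$\tfrac12$ estimate is tight, and above it the uniform $N^{-1/2}$ estimate takes over. Any looseness at this transition would degrade the exponent; the fact that both contributions to the Gagliardo integral balance exactly is what produces the sharp rate $1/2 - \eta$. The endpoint case $(\eta,p) = (0,\infty) \in \Lambda$, corresponding to the supremum norm, is handled separately via the Lévy modulus of continuity applied to the $N$ independent bridges.
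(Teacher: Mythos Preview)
The paper does not prove this statement; it is quoted directly from Friz--Victoir \cite{friz_multidimensional_2010}. Your argument supplies a correct self-contained proof. The mechanism you use --- recognise $B^{N}-B$ as a concatenation of $N$ independent rescaled Brownian bridges, derive the increment bound $\esp{|g_{N}(t)-g_{N}(s)|^{p}}^{1/p}\le c\,\min(|t-s|^{1/2},N^{-1/2})$, and integrate against the Gagliardo weight with a cut at $|t-s|=1/N$ --- is exactly the computation the paper itself carries out for the random-walk analogue in Theorem~\ref{propA1} (compare \eqref{eq:continuityModulus} and \eqref{eq_core:13}). So your approach is fully consonant with the paper's internal methods, and in effect fills in the citation.

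Two minor remarks. First, your near-diagonal integral converges only for $\eta<1/2$; the definition of $\Lambda$ constrains $\eta-1/p$ rather than $\eta$, but $\eta<1/2$ is in any case necessary for $B\in\fs_{\eta,p}$ almost surely, so this is a latent restriction in the statement rather than a gap in your argument. Second, at the endpoint $(0,\infty)$ the quantity $\esp{\,\cdot\,^{p}}^{1/p}$ is not literal; the maximum of $N$ independent bridges of scale $N^{-1/2}$ is of order $N^{-1/2}\sqrt{\log N}$, so the sup-norm rate carries a logarithmic factor. This is harmless for the downstream application in Theorem~\ref{thm:main}, where a logarithm already appears.
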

Moreover, we have
\begin{theorem}\label{propA1}
Let $(\eta,p)\in \Lambda.$ Assume that $X\in L^{p}(\ws;\R^{d},\mu_{\eta,p})$.
There exists a constant $c>0$ such that
\begin{equation*}
  \sup_{m,N}  N^{\frac{1}{2} -\eta}\, \esp{ \| \tS^{m} - \pi^N(\tS^{m}) \|_{{\eta,p}}^p}^{1/p} \le c \,\|X\|_{L^p}
  .
\end{equation*}
\end{theorem}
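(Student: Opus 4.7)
The plan is to compute $\|\tS^m - \pi^N(\tS^m)\|_{\fs_{\eta,p}}$ directly from an explicit bridge-type decomposition of the residual. First, observe that $\pi^N$ on $I_{1,2}^d$ corresponds, at the level of derivatives, to the $L^2$ conditional expectation with respect to the partition $\dyad^N$; hence $\pi^N(\tS^m)$ is exactly the piecewise affine interpolation of $\tS^m$ on $\dyad^N$, and $g:=\tS^{m}-\pi^N(\tS^{m})$ vanishes at every node $j/N$. Assuming $N|m$ and setting $k:=m/N$, I would obtain for $t\in[j/N,(j+1)/N]$ the closed form
\[
  g(t) = \frac{1}{\sqrt N}\sum_{l=0}^{k-1} X_{jk+l}\,\Phi_l\!\bigl(N(t-j/N)\bigr),\qquad \Phi_l(u) := \sqrt k\!\int_0^u\!\bigl(\car_{[l/k,(l+1)/k)}(v) - \tfrac{1}{k}\bigr)\,dv,
\]
a deterministic tent of amplitude $O(1/\sqrt k)$ vanishing at $0$ and $1$; the blocks $\xi_j := g\,\car_{[j/N,(j+1)/N]}$ are then i.i.d.\ up to translation.

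Next I would push the expectation inside $\|g\|_{\eta,p}^p$ via Fubini and apply the Marcinkiewicz--Zygmund/Rosenthal inequality to the i.i.d.\ centered sums defining $g(t)$ and $g(t)-g(s)$. Up to a smaller $\ell^p$ remainder this yields, for $(t,s)$ in the same $N$-block,
\[
  \esp{|g(t)-g(s)|^p}^{1/p} \le C\|X\|_{L^p}\,N^{-1/2}\Bigl(\sum_{l}\bigl(\Phi_l(u_t)-\Phi_l(u_s)\bigr)^{2}\Bigr)^{1/2},
\]
with $u_t:=N(t-j/N)$, and an analogous bound (whose coefficients are decoupled by independence) when $(t,s)$ lie in different blocks. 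The whole analysis then hinges on the two $k$-uniform tent estimates $\sum_l\Phi_l(u)^{2}\le u(1-u)+O(1/k)$ and $\sum_l\bigl(\Phi_l(u)-\Phi_l(v)\bigr)^{2}\le C|u-v|$.

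The final step is to split $\iint\cdots|t-s|^{-1-p\eta}\,dt\,ds$ into three regimes. \emph{(a) Same block:} rescale $u=N(t-j/N)$ and use the second tent estimate; the integrand is bounded by $(N|t-s|)^{p/2}|t-s|^{-1-p\eta}$ and, summed over $N$ blocks, contributes $N^{p\eta-p/2}$. \emph{(b) Far blocks} ($|j-j'|\ge 2$): $|t-s|\gtrsim |j-j'|/N$, so the uniform bound $\sum_l\Phi_l(u)^{2}\le C$ together with $\sum_{r\ge 2} r^{-1-p\eta}<\infty$ again gives $N^{p\eta-p/2}$. \emph{(c) Adjacent blocks}: $|t-s|$ may vanish here, so I would exploit that both $\xi_j$ and $\xi_{j+1}$ vanish at the shared node $(j+1)/N$; writing $g(t)-g(s)$ as the difference of two increments issuing from this node and using the sharp endpoint estimate $\sum_l\Phi_l(u)^{2}\le u(1-u)$ yields $\esp{|g(t)-g(s)|^p}\le C\|X\|_{L^p}^p|t-s|^{p/2}$, which is integrable against $|t-s|^{-1-p\eta}$ precisely on $\Lambda$ (where $\eta-1/p<1/2$). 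The pure $L^p$ term of $\|g\|_{\eta,p}^p$ is handled by a parallel but simpler calculation.

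The hard part will be regime (c): without the joint vanishing of $\xi_j$ and $\xi_{j+1}$ at their common endpoint the cross-interval Gagliardo integral would diverge on the diagonal, and the blunt pointwise bound of (a) and (b) is not enough there. The sharp, $k$-independent estimate $\sum_l\Phi_l(u)^{2}\le u(1-u)$, combined with careful bookkeeping of the exponent $p/2-1-p\eta$, is what produces the target rate $N^{\eta-1/2}\|X\|_{L^p}$ uniformly in $m\ge N$.
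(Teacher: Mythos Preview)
Your argument is correct in substance and reaches the same rate, but it is considerably more laborious than the paper's proof and carries an avoidable restriction.

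The paper does not introduce tent functions or a three-regime split. It argues directly that $\pi^{N}(\tS^{m})$ coincides with $\tS^{m}$ at every node of $\dyad^{N}$ (your observation too), and then, for arbitrary $s<t$, inserts the two nearest nodes $s_{+}^{N}$ and $t_{-}^{N}$. The middle piece vanishes identically, and each edge piece is handled by a single application of BDG/Marcinkiewicz--Zygmund (the paper's Lemma on discrete martingales, which plays exactly the role of your Rosenthal step). This yields in one stroke the unified increment bound
\[
\esp{\|(\tS^{m}-\pi^{N}\tS^{m})_{s,t}\|^{p}}^{1/p}\le c\,\|X\|_{L^{p}}\bigl(\sqrt{t-s}\wedge N^{-1/2}\bigr),
\]
after which a single Gagliardo integral $\iint[|t-s|\wedge N^{-1}]^{p/2}|t-s|^{-1-p\eta}\,ds\,dt\le cN^{-p(1/2-\eta)}$ finishes the job. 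Your regimes (a)--(c) effectively reconstruct this one-line bound piecewise: regime~(a) gives the $\sqrt{t-s}$ branch, regime~(b) the $N^{-1/2}$ branch, and your ``hard'' regime~(c) is nothing more than the edge-piece argument around the shared node---precisely the paper's $s_{+}^{N},t_{-}^{N}$ trick. So what you identify as the crux is in fact routine once one writes the increment through the nearest nodes.

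There is one genuine gap: you assume $N\mid m$, so that each $N$-block contains exactly $k=m/N$ of the $m$-intervals and the blocks $\xi_{j}$ are i.i.d. The theorem is stated for all $m,N$, and the paper's proof needs no divisibility: it only uses that $\langle h_{a}^{m},h_{b}^{N}\rangle_{\cm}\le\sqrt{N/m}$ and that at most $m/N+2$ of these pairings are nonzero for fixed~$b$, both of which hold regardless of divisibility. Your tent-function formula breaks down when blocks contain unequal numbers of $m$-intervals; you would have to redo the estimates with block-dependent $k_{j}\in\{\lfloor m/N\rfloor,\lceil m/N\rceil\}$ and check that the constants are uniform. This is fixable but adds bookkeeping that the paper's route avoids entirely.
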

This upper-bound is far from being optimal and it is likely that it could be
improved to obtain a factor $N^{1-\eta}$. However, in view of
\eqref{eq_core:10}, it would bring no improvement to our final result.
\begin{theorem}\label{propA2} Let $(\eta,p)\in \Lambda.$
  Let $X_a$ belong to $L^p(\ws;\R^{d},\mu_{\eta,p})$ for some $p\ge 3$. Then,
  there exists $c>0$ such that for any $f \in \Lip_{1}(\ws_{\eta, p})$,
  \begin{equation}\label{eq_donsker_wiener:proj-bis}
\esp{f(\pi^N (\tS^m)) }-\esp{ f(\pi^N(B^m))} \le c\,\|X\|_{L^{p}}\ \frac{N^{1+\eta}}{\sqrt{m}}\ln( \frac{N^{1+\eta}}{\sqrt{m}})\cdotp
  \end{equation}
\end{theorem}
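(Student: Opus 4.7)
The proof proceeds by Stein's method in the finite-dimensional subspace $V^N\subset I_{1,2}^{d}$. Assuming without loss of generality that $N$ divides $m$, the family $(h_a^m)_{a\in\A^m}$ is orthonormal in $V^m\supset V^N$ and the coefficients of $B^m$ in this basis are i.i.d.\ standard Gaussian; hence $\pi^N(B^m)$ is distributed as the standard Gaussian measure on $V^N$. Restricting $f$ to $V^N$ (it remains $1$-Lipschitz for the induced $\fs_{\eta,p}$-norm) and transposing the Stein-Dirichlet representation~\eqref{eq:SteinDirichlet} to the Ornstein-Uhlenbeck semi-group of $V^N$ yields
\begin{equation*}
  A_2 = \int_0^{\infty} \esp{LP_\tau f(\pi^N\tS^m)}\,d\tau,
\end{equation*}
where $LP_\tau f(y) = -\<y,\nabla P_\tau f(y)\> + \sum_b\<h_b^N\otimes h_b^N,\nabla^{(2)}P_\tau f(y)\>_{I_{1,2}^{\otimes 2}}$.

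Setting $Y=\pi^N\tS^m=\sum_a X_a\,\pi^N(h_a^m)$ and $Y^{\neg a}=Y-X_a\pi^N(h_a^m)$, each summand of the drift term is handled by the Stein exchange identity already displayed in~\eqref{eq_core:4}: since $X_a$ is centred and independent of $Y^{\neg a}$, the fundamental theorem of calculus gives
\begin{equation*}
\esp{X_a\<\pi^N(h_a^m),\nabla P_\tau f(Y)\>}=\int_0^1\esp{\<X_a^{\otimes 2}\otimes\pi^N(h_a^m)^{\otimes 2},\nabla^{(2)}P_\tau f(Y^{\neg a}+rX_a\pi^N(h_a^m))\>}dr.
\end{equation*}
Writing the integrand as $\bigl(\nabla^{(2)}P_\tau f(Y^{\neg a}+rX_a\pi^N(h_a^m))-\nabla^{(2)}P_\tau f(Y^{\neg a})\bigr)+\nabla^{(2)}P_\tau f(Y^{\neg a})$, the part depending only on $Y^{\neg a}$ telescopes, after summation over $a$, with the trace part of $LP_\tau f(Y)$ through the crucial finite-dimensional identity $\pi^N(h_a^m)=\sqrt{N/m}\,h_{\phi(a)}^N$ (with $\phi(a)$ the coarse block of $\dyad^N$ containing $a$) combined with $\esp{X\otimes X}=\Id_{\R^d}$; the residual mismatch $Y$-vs-$Y^{\neg a}$ in the arguments produces a second remainder of the same type.

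Both remainders are controlled by the Lipschitz modulus of $\nabla^{(2)}P_\tau f$ (measured in the mixed norm: $\fs_{\eta,p}$ for base-point displacements, $I_{1,2}^{\otimes 2}$ for the tensor argument) times $\|X_a\|^{3}\,\|\pi^N(h_a^m)\|_{I_{1,2}}^{2}\,\|\pi^N(h_a^m)\|_{\fs_{\eta,p}}$. Evaluating with $\|\pi^N(h_a^m)\|_{I_{1,2}}=\sqrt{N/m}$ and Lemma~\ref{lem:normeHDansDual} applied to $h_{\phi(a)}^N$, then summing over the $md$ elements of $\A^m$, yields an explicit combinatorial prefactor proportional to $\|X\|_{L^p}$. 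The $\tau$-integral of $\Lip(\nabla^{(2)}P_\tau f)$ is the delicate part: the bound provided by Theorem~\ref{thm:majoModulusOfContinuity}, which exploits the hierarchy $V^N\subset V^m$ sketched in the introduction to sharpen the naive Ornstein-Uhlenbeck control, is exponentially decaying at infinity but only logarithmically fails to be integrable at $\tau=0$. Splitting the integral at an optimal threshold $\tau_\star$ matched to the combinatorial factor---bounding by the Lipschitz estimate on $(0,\tau_\star)$ and by the pure $P_\tau$-contraction on $(\tau_\star,\infty)$---extracts exactly one logarithm and delivers~\eqref{eq_donsker_wiener:proj-bis}.

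The main obstacle is the bookkeeping tied to Theorem~\ref{thm:majoModulusOfContinuity}: one must use the Lipschitz modulus of $\nabla^{(2)}P_\tau f$ in precisely the mixed norm dictated by the Malliavin pairing, track how the $N$-dependence gets distributed between the modulus and the per-term combinatorial cost $\|\pi^N(h_a^m)\|_{I_{1,2}}^{2}\|\pi^N(h_a^m)\|_{\fs_{\eta,p}}$, and tune $\tau_\star$ so that the two contributions combine to the advertised $N^{1+\eta}/\sqrt m$ prefactor together with a single logarithmic factor.
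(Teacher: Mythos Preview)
Your proposal follows the same route as the paper: the Stein--Dirichlet representation, the exchange/Taylor identity of Lemma~\ref{lem:reductionToContinuityModulus} reducing matters to the Lipschitz modulus of $\nabla^{(2)}P_\tau f_N$, the key estimate Theorem~\ref{thm:majoModulusOfContinuity}, and finally a split of the $\tau$-integral optimized over the threshold. Your assumption $N\mid m$ gives the clean identity $\pi^N(h_a^m)=\sqrt{N/m}\,h^N_{\phi(a)}$ and lets you work directly in $V^N$; the paper keeps $N,m$ arbitrary, works with $f_N=f\circ\pi^N$ on $V^m$, and absorbs the resulting mismatch into the conditional-expectation variance bound of Lemmas~\ref{lem:cov} and~\ref{lem-maj-var-esp} inside the proof of Theorem~\ref{thm:majoModulusOfContinuity}.

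One point needs correction. In your final paragraph the two regimes are interchanged: the bound of Theorem~\ref{thm:majoModulusOfContinuity} carries the factor $\beta_{\tau/2}^{-2}\sim\tau^{-1}$ and is \emph{not} integrable on $(0,\tau_\star)$, so it cannot be the tool near the origin. The paper's split is the opposite one: on $(0,\tau_0)$ one uses the elementary Lipschitz-of-$f$ estimate $\bigl|\esp{P_{\tau_0}f_N(\tS^m)}-\esp{f_N(\tS^m)}\bigr|\le c\,\|X\|_{L^p}\sqrt{1-e^{-\tau_0}}$ of Lemma~\ref{lem:voisDeZero}, while Theorem~\ref{thm:majoModulusOfContinuity} is applied on $(\tau_0,\infty)$, where $\int_{\tau_0}^\infty e^{-5\tau/2}(1-e^{-\tau})^{-1}\,d\tau\sim|\ln\tau_0|$. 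Balancing $\sqrt{\tau_0}$ against $(N^{1+\eta}/\sqrt m)\,|\ln\tau_0|$ then yields the stated bound with a single logarithmic factor.
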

The global upper-bound for \eqref{dec-A} is proportional to 
\begin{equation*}
  N^{-1/2+\eta}+N^{1+\eta}m^{-1/2}\ln(N^{1+\eta}m^{-1/2}).
\end{equation*}
See $N$ as a function of $m$ and note that this expression is minimal for $N\sim m^{1/3}$.
Plug this into the previous expressions to obtain the main result of this paper:
\begin{theorem}
  \label{thm:main}
  Assume that $X\in L^{p}(\ws;\R^{d},\mu_{\eta,p})$.
  Then, there exists a constant $c>0$ such that
  \begin{equation}
    \label{eq:main}
    \sup_{f \in \Lip_{1}(\ws_{\eta, p})} \esp{f(\tS^{m})} - \esp{f(B)} 
    \le c\, \|X\|_{L^p(\Omega)}^p   \ m^{-\frac{1}{6} +\frac{\eta}{3}}\,\ln (m) .
  \end{equation}
\end{theorem}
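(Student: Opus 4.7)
The plan is to combine the three already-stated ingredients (Theorem for $A_3$ from Friz--Victoir, Theorem~\ref{propA1} for $A_1$, Theorem~\ref{propA2} for $A_2$) into the decomposition \eqref{dec-A}, bound each piece separately, and then tune the auxiliary parameter $N$ to balance the competing scales. Since $f\in\Lip_{1}(\ws_{\eta,p})$, both $|A_{1}|$ and $|A_{3}|$ reduce to estimates of $L^{1}(\mu_{\eta,p})$ norms of path differences, while $|A_{2}|$ is exactly of the form handled by Stein's method in Theorem~\ref{propA2}.

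For $A_{1}$, I would simply write $|A_{1}|\le \esp{\|\tS^{m}-\pi^{N}(\tS^{m})\|_{\eta,p}}$, then use Jensen's inequality and Theorem~\ref{propA1} to get $|A_{1}|\le c\,\|X\|_{L^{p}}\,N^{-(1/2-\eta)}$. For $A_{3}$, the key observation is that $B^{m}$ has the form of $\tS^{m}$ with increments $X_{a}=\sqrt{m}(B_{a_{1}}((a_{2}+1)/m)-B_{a_{1}}(a_{2}/m))$, which are i.i.d.\ standard Gaussian; therefore Theorem~\ref{propA1} applies to $B^{m}$ with a universal constant. Splitting
\begin{equation*}
  \esp{\|\pi^{N}(B^{m})-B\|_{\eta,p}}\le \esp{\|\pi^{N}(B^{m})-B^{m}\|_{\eta,p}}+\esp{\|B^{m}-B\|_{\eta,p}}
\end{equation*}
and combining Theorem~\ref{propA1} (with $X$ Gaussian) with \eqref{eq_core:10} yields $|A_{3}|\le c\,N^{-(1/2-\eta)}$, using $N\le m$ to absorb the second term. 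For $A_{2}$, nothing is to do: Theorem~\ref{propA2} gives directly $|A_{2}|\le c\,\|X\|_{L^{p}}\,\frac{N^{1+\eta}}{\sqrt{m}}\ln\bigl(\frac{N^{1+\eta}}{\sqrt{m}}\bigr)$.

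Adding the three bounds and taking the supremum over $f\in\Lip_{1}(\ws_{\eta,p})$ one obtains
\begin{equation*}
   \sup_{f\in\Lip_{1}}\bigl(\esp{f(\tS^{m})}-\esp{f(B)}\bigr)\le c\,\|X\|_{L^{p}}\Bigl(N^{\eta-1/2}+\frac{N^{1+\eta}}{\sqrt{m}}\ln\Bigl(\frac{N^{1+\eta}}{\sqrt{m}}\Bigr)\Bigr).
\end{equation*}
Equating the two leading terms $N^{\eta-1/2}=N^{1+\eta}m^{-1/2}$ gives $N^{3/2}=m^{1/2}$, i.e.\ the optimal choice $N\sim m^{1/3}$ already announced in the paper. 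Substituting back yields the rate $m^{-1/6+\eta/3}\ln m$ stated in \eqref{eq:main}, since the logarithmic factor $\ln(N^{1+\eta}m^{-1/2})$ becomes $\ln(m^{(1+\eta)/3-1/2})$, of order $\ln m$.

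There is no real obstacle once the three component theorems are in hand: the main step of the present statement is algebraic optimisation. The only point that deserves care is ensuring that Theorem~\ref{propA1} can legitimately be invoked with $\tS^{m}$ replaced by the Gaussian interpolation $B^{m}$, which is clear from its construction as a sum of the form $\sum_{a}X_{a}h_{a}^{m}$ with standard Gaussian $X_{a}$, and that the condition $N\le m$ required for $\pi^{N}$ to make sense against the scale-$m$ process is compatible with the optimal $N\sim m^{1/3}$, which it evidently is.
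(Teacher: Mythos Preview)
Your proposal is correct and follows essentially the same route as the paper. The paper's own argument for Theorem~\ref{thm:main} is precisely the two-line optimisation you carry out: it collects the bounds $N^{-1/2+\eta}$ for $A_{1}$ and $A_{3}$ (from Theorem~\ref{propA1} and \eqref{eq_core:10}) and $N^{1+\eta}m^{-1/2}\ln(N^{1+\eta}m^{-1/2})$ for $A_{2}$ (from Theorem~\ref{propA2}), then sets $N\sim m^{1/3}$. Your extra remark that Theorem~\ref{propA1} applies to $B^{m}$ because its increments are i.i.d.\ standard Gaussians, together with the splitting $\|\pi^{N}(B^{m})-B\|\le\|\pi^{N}(B^{m})-B^{m}\|+\|B^{m}-B\|$, just makes explicit what the paper leaves implicit in bounding $A_{3}$.
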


As an application of the previous considerations, we obtain as a corollary an
approximation theorem for the local time of the Brownian motion.

The reflected Brownian motion is defined  as 
\begin{align*}
R_t = B_t + \sup_{ 0\le  s \le  t} \max \left(0,  -B_s\right)
\end{align*}
and the reflected linear interpolation of random walk is
\begin{align*}
R^m_t = X^m_t + \sup_{ 0\le  s \le  t} \max \left(0,  -X_s^m\right):=X^m_t+L_{0}^{m}(t).
\end{align*}
The process $L_{0}(t):=\sup_{ 0\le  s \le  t} \max \left(0,  -B_s\right)$ is an expression
of the local time of the Brownian motion at~$0$.   
Note that the map $f  \mapsto \left( t \mapsto f(t) + \sup_{ 0\le  s \le  t}
  \max \left(0,  -f(s)\right)\right)$ is  Lipschitz continuous from any
$W_{\eta,p}$ into $W_{0,\infty}$. One of the interest of our new result is that
we can then apply  the previous theorem in $W_{0,\infty}$ to $L_{0}^{m}$ and $L_{0}$.
We get
\begin{corollary}\label{cor-theo-princ}
Assume that the hypothesis of Theorem~\ref{thm:main} hold. There exists a
constant $c>0$ such that 
\begin{equation*}
\sup_{f \in \Lip_{1}(\ws_{0,\infty})} \esp{f(L^{m}_{0})} - \esp{f(L_{0})} \le  c \|X\|_{L^3}\, m^{-\frac{1}{6} }\ln (m) .
\end{equation*}
\end{corollary}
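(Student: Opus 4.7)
My plan is to reduce Corollary~\ref{cor-theo-princ} to Theorem~\ref{thm:main} by writing the local time as the image of the underlying process under a $1$-Lipschitz map on $\ws_{0,\infty}$ and then composing the test functions with this map.

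First, I would introduce
\[
\Phi : \ws_{0,\infty} \to \ws_{0,\infty}, \qquad \Phi(x)(t) = \sup_{0 \le s \le t} \max\bigl(0, -x(s)\bigr),
\]
so that $L_{0} = \Phi(B)$ and, identifying the random walk $X^{m}$ in the statement with $\tS^{m}$, $L_{0}^{m} = \Phi(\tS^{m})$. The elementary contraction $|a^{+} - b^{+}| \le |a-b|$ together with the monotonicity of the running supremum yields
\[
\bigl\|\Phi(x) - \Phi(y)\bigr\|_{0,\infty} \le \|x-y\|_{0,\infty},
\]
i.e.\ $\Phi$ is $1$-Lipschitz from $\ws_{0,\infty}$ into itself. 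Consequently, for any $f \in \Lip_{1}(\ws_{0,\infty})$, the composition $g := f \circ \Phi$ still belongs to $\Lip_{1}(\ws_{0,\infty})$.

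Next, I would invoke the Sobolev embedding $\ws_{\eta,p} \hookrightarrow \ws_{0,\infty}$, which holds because $(\eta,p) \in \Lambda$ implies $\eta - 1/p > 0$; this provides a finite constant $C(\eta,p)$ such that $g \in \Lip_{C(\eta,p)}(\ws_{\eta,p})$. Applying Theorem~\ref{thm:main} to the normalized test function $g/C(\eta,p) \in \Lip_{1}(\ws_{\eta,p})$ then produces
\[
\esp{f(L_{0}^{m})} - \esp{f(L_{0})} \;=\; \esp{g(\tS^{m})} - \esp{g(B)} \;\le\; c\,C(\eta,p)\,\|X\|_{L^{p}}\,m^{-\frac{1}{6}+\frac{\eta}{3}}\,\ln m,
\]
and taking the supremum over $f \in \Lip_{1}(\ws_{0,\infty})$ on the left-hand side yields the desired Kolmogorov--Rubinstein bound.

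The main obstacle I foresee is the parameter bookkeeping at the very end: to recover the announced rate $m^{-1/6}\ln m$ with the $L^{3}$ moment, one has to push $\eta$ toward $0$ inside $\Lambda$, which forces $p$ to be large enough that $\eta - 1/p > 0$ still holds, while keeping the Sobolev embedding constant $C(\eta,p)$ under control and ensuring that $\|X\|_{L^{p}}$ can be compared to $\|X\|_{L^{3}}$ via the fact that $g$ effectively depends only on the uniform norm of its argument. Apart from this calibration, all the analytic content is already packaged in Theorem~\ref{thm:main} and in the Lipschitz property of $\Phi$.
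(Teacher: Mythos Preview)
Your reduction via composition with the $1$-Lipschitz map $\Phi$ is exactly the paper's argument. The obstacle you identify at the end, however, is an artefact of an unnecessary detour: you overlook that the parameter set $\Lambda$ explicitly \emph{includes} the endpoint $(0,\infty)$, so Theorem~\ref{thm:main} applies directly in $\ws_{0,\infty}$. With $\eta=0$ the exponent is precisely $-1/6$, and no Sobolev embedding $\ws_{\eta,p}\hookrightarrow\ws_{0,\infty}$, no limit $\eta\to 0$, and no control of a blowing-up constant $C(\eta,p)$ is needed. Since $\Phi$ is $1$-Lipschitz from $\ws_{0,\infty}$ into itself, $f\circ\Phi\in\Lip_1(\ws_{0,\infty})$ and the corollary follows in one line from Theorem~\ref{thm:main} with $(\eta,p)=(0,\infty)$. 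By contrast, your route through a fixed $\eta>0$ only yields $m^{-1/6+\eta/3}\ln m$; the limiting procedure you sketch cannot close this gap, because neither the embedding constant nor the implicit constant in Theorem~\ref{thm:main} is uniform as $(\eta,p)$ approaches the boundary of $\Lambda$.

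On the $L^3$ versus $L^p$ moment: your concern is legitimate in that the statement of Theorem~\ref{thm:main} writes $\|X\|_{L^p}^p$, which is awkward at $p=\infty$. This is a looseness in the paper's phrasing rather than something you are expected to repair; the corollary simply assumes the hypotheses of Theorem~\ref{thm:main} and records the dependence on $X$ through $\|X\|_{L^3}$, the third moment being what actually enters via Theorem~\ref{propA2}.
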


\section{Proofs}
\label{sec:proofs}
In what follows, $c $ denote a non significant constant which may vary from line
to line. We borrow from the current usage in rough path theory the notation
\begin{equation}\label{eq_core:22}
  f_{s,t} = f(t)-f(s).
\end{equation}

As a preparation to the proof of Theorem~\ref{propA1}, we need the following lemma.
\begin{lemma}\label{lem-mart-dis}
For all $p\ge 2 $,  there exists a constant $c_p$ such  that for any sequence of
independent, identically distributed  random variables   $(X_i, {i \in \N})$ with  $X\in L^{p}$ and any sequence  $(\alpha_i,\, i\in \N)$.
\begin{equation*}
\esp{\left| \sum_{i=1}^n \alpha_i X_i \right|^p } \le  c_p \bigl|\{i\le  n, \alpha_i\neq 0\}\,\bigr|^{p/2} (\sum_{i \le  n}|\alpha_i|^p) \ {\mathbb E}(|X|^p),
\end{equation*}
where $|A|$ is the cardinality of the set $A$.
\end{lemma}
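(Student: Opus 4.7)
The plan is to recognise the sum as a discrete-time martingale (the paper's standing hypothesis is $\esp{X}=0$), apply the Burkholder--Davis--Gundy inequality to control its $p$-th moment by the quadratic variation, and then exploit the fact that only $k := |\{i \le n,\ \alpha_i \neq 0\}|$ summands actually contribute, which is where the combinatorial factor in the claim will come from.

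First I would set $M_n = \sum_{i=1}^n \alpha_i X_i$ and observe that $(M_n)$ is a centred, square-integrable martingale with respect to the natural filtration of $(X_i)$. The Burkholder--Davis--Gundy inequality (equivalently, Marcinkiewicz--Zygmund for independent summands) then supplies a constant $c_p$ depending only on $p$ such that
\begin{equation*}
\esp{|M_n|^p} \le c_p \esp{\Bigl(\sum_{i=1}^n \alpha_i^2 X_i^2\Bigr)^{p/2}}.
\end{equation*}

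Second, since the sum in the bracket contains at most $k$ non-zero terms, the convexity inequality applied to $x \mapsto x^{p/2}$ (legitimate because $p/2 \ge 1$) yields
\begin{equation*}
\Bigl(\sum_{i:\,\alpha_i \neq 0} \alpha_i^2 X_i^2\Bigr)^{p/2}
\le k^{p/2-1} \sum_{i:\,\alpha_i \neq 0} |\alpha_i|^p |X_i|^p.
\end{equation*}
Taking expectations and using the identical distribution to substitute $\esp{|X_i|^p} = \esp{|X|^p}$, I obtain
\begin{equation*}
\esp{|M_n|^p} \le c_p\, k^{p/2-1} \Bigl(\sum_{i \le n} |\alpha_i|^p\Bigr)\, \esp{|X|^p}.
\end{equation*}
The trivial estimate $k^{p/2-1} \le k^{p/2}$ (valid whenever $k \ge 1$, which is the only non-trivial case) then delivers the stated inequality.

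There is no real obstacle here: the lemma is a slightly loose form of Marcinkiewicz--Zygmund, and the combinatorial factor $k^{p/2}$ is produced directly by the Jensen step. One could in fact sharpen the exponent to $k^{p/2-1}$, but the stated form is what is used later in the proof of Theorem~\ref{propA1}.
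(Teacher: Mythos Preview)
Your proof is correct and follows essentially the same route as the paper: Burkholder--Davis--Gundy applied to the discrete martingale, followed by Jensen on the $k$ non-zero terms to extract the factor $k^{p/2-1}$, and then the trivial relaxation to $k^{p/2}$. Your remark that the sharper exponent $k^{p/2-1}$ is actually obtained is exactly what the paper's own computation yields before concluding.
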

\begin{proof}
The Burkholder-Davis Gundy inequality applied to the discrete martingale
$(\sum_{i=1}^n \alpha_i X_i, \, n\ge 0)$ yields
\begin{equation*}
\esp{ \left| \sum_{i=1}^n \alpha_i X_i \right|^p } \le  c_p \esp{ \left| \sum_{i=1}^n \alpha_i^2 X_i^2 \right|^{p/2}}.
\end{equation*}
Using Jensen inequality we obtain
\begin{equation*}
\esp{ \left| \sum_{i=1}^n \alpha_i X_i \right|^p} \le  c_p \bigl|\{i\le  n, \alpha_i\neq 0\}\bigr|^{p/2-1}\ \esp{ \sum_{i=1}^n |\alpha_i|^p X_i^p }.
\end{equation*}
The proof is thus complete.
\end{proof}
\begin{proof}[Proof of Theorem~\ref{propA1}]
 Actually, we already proved in  \cite{coutin_convergence_2017} that
\begin{equation}\label{lem3.1-la}
 \esp{ \|\tS^{m}_{s,t}\|^{p}}\le c \|X\|_{L^{p}}\ \left(\sqrt{t-s}\wedge m^{-1/2}\right).
\end{equation}
Assume that $s$ and $t$ belongs to the same sub-interval: 
There exists $l\in \{1,...,N \}$ such that $$\frac{l-1}{N} \le  s<t \le  \frac{l}{N}\cdotp$$
Then  we have (see \eqref{eq_core:22})
\begin{equation*}
  \pi^N(\tS^m)_{s,t}= \sqrt{N}\, \left(\sum_{k=1}^m X_k\,
    (h_k^m, h^N_l)_{\cm}\right) \,(t-s). 
\end{equation*}
Using Lemma \ref{lem-mart-dis}, there exists a constant $c$ such that
\begin{equation*}
\frac{\| \pi^N(\tS^m)_{s,t} \|_{L^{p}} }{\sqrt{N}\,|t-s| }
\le  c \, \|X\|_{L^{p}}\ \bigl|\{k,\,(h_k^m,h_l^N)_{\cm}\neq 0 \}\bigr|^{1/2}   \sup_k \left|(h_k^m, h^N_l)_{\cm}\right|.
\end{equation*}
Note that $|(h_k^m,h_l^N)_{\cm}|\le  \sqrt{\frac{N}{m}}$ and 
there is at most $\frac{m}{N} +2$ terms such that $ (h_k^m, h_N^l)_{\cm}$ is
non zero. Thus,
\begin{equation*}
  \label{eq:lem3.1-1}
  \frac{\| \pi^N(\tS^m)_{s,t} \|_{L^{p}} }{\sqrt{N}\,|t-s|} \le c\,  \|X\|_{L^{p}}\ \left(\frac{m}{N} +2\right)^{1/2}\sqrt{\frac{N}{m}}  \le c\,  \|X\|_{L^{p}},
\end{equation*}
as $m/N$ tends to infinity. Since $|t-s|\le 1/N$,
\begin{equation}\label{eq_core:11}
  \| \pi^N(\tS^m)_{s,t} \|_{L^{p}} \le c\,  \|X\|_{L^{p}}\, \sqrt{|t-s|}.
\end{equation}
For $0 \le  s \le  t \le  1$ let $s_+^N:=\min\{l,\,s \le  \frac{l}{N}\}$ and
$t_-^N:=\sup\{l,\,t \ge \frac{l}{N}\}$. 
We have
\begin{multline*}
  \pi^N(\tS^m)_{s,t} - \tS^m_{s,t} = \bigl(\pi^N(\tS^m)_
  {s,s_+^N} - \tS^m_{s,s_+^N}\bigr) \\
  +\bigl(\pi^N(\tS^m)_{s_+^N,t_-^N } - \tS^m_{s_+^N,t_-^N }  \bigr) + \bigl (\pi^N(\tS^m)_{t_-^N,t} - \tS^m_{t_-^N,t}\bigr).
\end{multline*}
Note that for all $f\in \ws_{\eta,p},$ $\pi^{N}(f)$ is the linear interpolation of $f$ along the subdivision $\dyad_{N}$;
hence,
for $s,t \in \dyad_{N}$, $\pi^N(\tS^m)_{s,t}=\tS^m_{s,t}$ . Thus the median term
vanishes and  we obtain
\begin{multline}\label{eq_core:14}
\esp{   \|\pi^N(\tS^m)_{s,t} - \tS^m_{s,t} \|^{p}}\le c\Bigl( \esp{\|\pi^N(\tS^m)_{s,s_+^N}\|^{p
  }} +  \esp{\|\tS^m_{s,s_+^N}\|^{p  }}\\ + \esp{\|\pi^{N}(\tS^m)_{t_-^N,t}\|^{p  }}+ \esp{\|\tS^m_{t_-^N,t}\|^{p  }} \Bigr).
\end{multline}
From \eqref{eq_core:11}, we deduce that
\begin{equation}
\label{eq_core:12}  \esp{\|\pi^N(\tS^m)_{s,s_+^N}\|^{p}}^{1/p}\le c \, \|X\|_{L^{p}}\  \sqrt{s_{+}^{N}-s}\le c \,\|X\|_{L^{p}}\  N^{-1/2},
\end{equation}
and the same holds for $ \esp{\|\pi^N(\tS^m)_{t_-^N,t}\|^{p}}$. We infer from
 \eqref{lem3.1-la}, \eqref{eq_core:11} and \eqref{eq_core:12} that
\begin{equation}
  \label{eq:continuityModulus}
   \esp{\|\pi^N(\tS^m)_{s,t}-\tS^{m}_{s,t}\|^{p}}^{1/p}\le c \, \|X\|_{L^{p}}\left(\sqrt{t-s}\wedge N^{-1/2}\right).
\end{equation}
A straightforward computation shows that
\begin{equation}\label{eq_core:13}
  \iint_{[0,1]^2} \frac {[| t-s| \wedge {N^{-1}}]^{p/2}}{|t-s|^{ 1+\eta p}} \dif s\dif t\le  c\,  N^{-p ( 1/2 -\eta)}.
\end{equation}
The result follows \eqref{eq:continuityModulus} and \eqref{eq_core:13}.
\end{proof}
\subsection{Stein method}
\label{stein}
We wish to estimate
\begin{align*}
\esp{ f( \pi^N(\tS^m) )} - \esp{ f( \pi^N(B^m) )},
\end{align*}
using the Stein's method.  For the sake of simplicity, we set
\begin{equation*}
  f_{N}=f\circ \pi^{N}.
\end{equation*}

The Stein-Dirichlet representation formula \cite{DecreusefondSteinDirichletMalliavinmethod2015}
stands that, for any ${\tau_0}>0$,
\begin{multline*}\label{eq_donsker_wiener:2}
 \esp{f_N (\wB^m)}-\esp{f_N (\tS^m)}=\esp{ \int_0^\infty
  \frac{d}{dt}P_\tau f_N (\tS^m)\dif \tau }\\
=\esp{P_{\tau_0} f_N (\tS^m)-f_N (\tS^m)}+\esp{\int_{\tau_0}^\infty
  LP_\tau f_N (\tS^m)\dif \tau},
\end{multline*}
where 
\begin{equation*}
LP_\tau f_{N}(S^{m})=-\< \tS^{m}, \nabla P_\tau f_{N}(\tS^{m }) \>_{H}+\sum_{a\in \A^{m}}\<\nabla^{(2)} P_\tau f_{N}(\tS^{m}),\, h_{a}^{m}\otimes h_{a}^{m}\>_{\cm^{\otimes 2}}.
\end{equation*}
It is straightforward (see \cite[Lemma 4.1]{coutin_convergence_2017}):
\begin{lemma}\label{lem:voisDeZero}
For any $(\eta,p)\in \Lambda$, there exists a constant $c>0$ such that for any sequence of independent,
centered random vectors  $(X_a, \, a\in \A^{m})$ such that $\esp{ \|X\|^p}
<\infty$, for 
any $f\in \Lip_{1}(W_{\eta,p})$, we have
\begin{align*}
\esp{f(\tS^m)}- \esp{ P_{\tau_0} f(\tS^m)} \le  c\, \|X\|_{L^{p}}\ \sqrt{1- e^{\tau_0}}.
\end{align*}
\end{lemma}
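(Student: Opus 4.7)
The plan is to bound $|\esp{f(\tS^m) - P_{\tau_0}f(\tS^m)}|$ directly, using only the $1$-Lipschitz property of $f$ together with the Mehler representation of the Ornstein-Uhlenbeck semigroup. Writing
\begin{equation*}
P_{\tau_0}f(\tS^m) - f(\tS^m) = \int_{\ws_{\eta,p}} \bigl[f(e^{-\tau_0}\tS^m + \beta_{\tau_0}y) - f(\tS^m)\bigr]\,d\mu_{\eta,p}(y),
\end{equation*}
the Lipschitz assumption combined with the triangle inequality immediately yields the pointwise bound
\begin{equation*}
\bigl|P_{\tau_0}f(\tS^m) - f(\tS^m)\bigr| \le (1-e^{-\tau_0})\,\|\tS^m\|_{\eta,p} + \beta_{\tau_0}\int_{\ws_{\eta,p}} \|y\|_{\eta,p}\,d\mu_{\eta,p}(y).
\end{equation*}

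Taking expectation over the law of $\tS^m$ then reduces the proof to two independent sub-problems. First, the Gaussian integral $M := \int \|y\|_{\eta,p}\,d\mu_{\eta,p}(y)$ is finite, by Fernique's theorem applied to the Wiener measure on the separable Banach space $\ws_{\eta,p}$. Second, I must establish the moment estimate $\esp{\|\tS^m\|_{\eta,p}} \le c\,\|X\|_{L^p}$ uniformly in $m$.

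For the latter, I plan to proceed along the same lines as in the proof of Theorem~\ref{propA1}: expand the $\ws_{\eta,p}$-norm via Fubini,
\begin{equation*}
\esp{\|\tS^m\|^p_{\eta,p}} = \int_0^1 \esp{|\tS^m(t)|^p}\,dt + \iint_{[0,1]^2} \frac{\esp{|\tS^m_{s,t}|^p}}{|t-s|^{1+p\eta}}\,ds\,dt,
\end{equation*}
and insert the increment estimate \eqref{lem3.1-la}. The double integral converges since the condition $(\eta,p)\in\Lambda$ produces exactly the right balance between the CLT-type exponent $\sqrt{|t-s|}$ and the Sobolev weight $|t-s|^{1+p\eta}$ in the denominator, and the resulting bound is uniform in $m$.

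Finally, combining these two estimates with the elementary inequality $1-e^{-\tau_0} \le \sqrt{1-e^{-2\tau_0}} = \beta_{\tau_0}$ collapses both contributions into a single bound of the required order in $\tau_0$ and linear in $\|X\|_{L^p}$. The main technical point is the uniform-in-$m$ moment bound on $\|\tS^m\|_{\eta,p}$; everything else follows immediately from the Mehler formula and the Lipschitz hypothesis, which is why the authors can label the statement as straightforward.
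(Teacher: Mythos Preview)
Your argument is correct. The paper does not actually prove this lemma here; it simply refers to \cite[Lemma~4.1]{coutin_convergence_2017} and calls the result straightforward, so there is no in-text proof to compare against. Your route---Mehler formula, the $1$-Lipschitz bound, Fernique's theorem for $M=\int\|y\|_{\eta,p}\,d\mu_{\eta,p}$, and the uniform-in-$m$ moment bound on $\|\tS^m\|_{\eta,p}$ obtained by plugging \eqref{lem3.1-la} into the Fubini expansion of the $W_{\eta,p}$-norm---is exactly the natural one. One small remark on the final step: your bound reads $(1-e^{-\tau_0})\,c\,\|X\|_{L^p}+\beta_{\tau_0}M$, where the second term carries no $\|X\|_{L^p}$ factor. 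This is harmless because the standing normalization $\esp{\|X\|^{2}}=1$ together with $p\ge 2$ forces $\|X\|_{L^p}\ge 1$, so the constant $M$ can be absorbed into $c\,\|X\|_{L^p}$.
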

We now show, that as usual, the rate of convergence in the Stein's method is
related to the Lipschitz modulus of the second order derivative of the
solution of the Stein's equation. Namely, we have
\begin{lemma}
\label{lem:reductionToContinuityModulus}
For any $f\in \Lip_{1}(\ws_{\eta,p})$, we have
  \begin{multline*}
  \esp{ L P_\tau f_N (\tS^m)}\\
 \shoveleft{ =-\ \esp{\sum_{a\in \cA^m} \<\nabla^{(2)} P_\tau
                                         f_N(\tS^m_{\neg a})-\nabla^{(2)} P_\tau
                                         f_N(\tS^m),\, h^m_a\otimes h^m_a\>_{\cm^{\otimes 2}}}}\\
                                       + \ \esp{\sum_{a\in \cA}
                                         \,X_a^2\int_0^1 \ \<\nabla^{(2)} P_\tau
                                        f_{N}(\tS^m_{\neg a}+r X_a h^m_a)-\nabla^{(2)}P_\tau f_N(\tS^m_{\neg
                                           a}),\,
                                         h^m_a{}^{\otimes 2}\>_{\cm^{\otimes 2}}\kern-5pt\dif
                                         r}.
\end{multline*}
\end{lemma}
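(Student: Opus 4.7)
The plan is to carry out, in the Malliavin framework, the same exchangeable-pair manipulation that leads to \eqref{eq_core:4} in the one-dimensional CLT sketched in the introduction, now applied to the cylindrical functional $P_\tau f_N$. The starting point is the generator identity
\begin{equation*}
 L P_\tau f_N(\tS^m) = -\<\tS^m,\nabla P_\tau f_N(\tS^m)\>_H + \sum_{a\in \A^m}\<\nabla^{(2)} P_\tau f_N(\tS^m), h_a^m\otimes h_a^m\>_{\cm^{\otimes 2}};
\end{equation*}
everything below rewrites only the expectation of the drift term, using that $f_N=f\circ\pi^N$ is cylindrical so that all derivatives and Fubini-type manipulations take place in a finite-dimensional Gaussian space.

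First I would expand $\tS^m = \sum_{a} X_a h_a^m$, producing $-\sum_a \esp{X_a\<h_a^m,\nabla P_\tau f_N(\tS^m)\>_H}$. For each $a$, because $X_a$ is independent of $\tS^m_{\neg a}:=\tS^m - X_a h_a^m$ and $\esp{X_a}=0$, I may subtract $\<h_a^m,\nabla P_\tau f_N(\tS^m_{\neg a})\>_H$ without changing the expectation. A fundamental-theorem-of-calculus expansion along the segment from $\tS^m_{\neg a}$ to $\tS^m$ then gives
\begin{equation*}
 \<h_a^m,\nabla P_\tau f_N(\tS^m) - \nabla P_\tau f_N(\tS^m_{\neg a})\>_H = X_a \int_0^1 \<\nabla^{(2)} P_\tau f_N(\tS^m_{\neg a}+r X_a h_a^m), h_a^m\otimes h_a^m\>_{\cm^{\otimes 2}} \dif r,
\end{equation*}
contributing the second factor of $X_a$ and the tensor $h_a^m\otimes h_a^m$ that must appear on the right-hand side of the lemma.

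To finish, I would add and subtract $\nabla^{(2)} P_\tau f_N(\tS^m_{\neg a})$ inside that Taylor integral, splitting it into a remainder (which is exactly the second displayed term of the lemma) and a constant-in-$r$ anchor $\sum_a\esp{X_a^2\<\nabla^{(2)} P_\tau f_N(\tS^m_{\neg a}),h_a^m\otimes h_a^m\>}$. By independence of $X_a$ from $\tS^m_{\neg a}$ and the normalisation $\esp{X_a^2}=1$, the anchor reduces to $\sum_a\esp{\<\nabla^{(2)} P_\tau f_N(\tS^m_{\neg a}),h_a^m\otimes h_a^m\>}$; combined with the unmodified trace-like term $\sum_a\esp{\<\nabla^{(2)} P_\tau f_N(\tS^m),h_a^m\otimes h_a^m\>}$ from the very first step it produces the telescoping difference $\<\nabla^{(2)} P_\tau f_N(\tS^m)-\nabla^{(2)} P_\tau f_N(\tS^m_{\neg a}),h_a^m\otimes h_a^m\>$, i.e.\ (after the sign rewriting $-\<A-B,\cdot\>=\<B-A,\cdot\>$) the first displayed term of the lemma. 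The whole argument is essentially algebraic; the only real obstacle is organisational bookkeeping---verifying that the Fubini exchange of $\esp{\cdot}$ with $\int_0^1\dif r$, the truncation of the trace to $a\in\A^m$, and the differentiation used in the fundamental theorem of calculus are all legitimate---which is immediate from the cylindricity of $f_N$.
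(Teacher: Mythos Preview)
Your proof is correct and follows essentially the same route as the paper's: expand $\tS^m=\sum_a X_a h_a^m$ in the drift term, use independence and $\esp{X_a}=0$ to insert $\nabla P_\tau f_N(\tS^m_{\neg a})$, apply the Taylor formula to get the second-derivative integral, then use $\esp{X_a^2}=1$ to strip the $X_a^2$ from the anchor term and combine it with the trace part of $L$. The paper's proof is exactly this computation, phrased slightly more tersely (it does not explicitly name the add-and-subtract step, saying only ``the result follows by difference'').
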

\begin{proof}[Proof of Lemma~\ref{lem:reductionToContinuityModulus}]
Let $\tS^m_{\neg a} = \tS^m- X_a h^m_a$. Since the $X_a$'s are independent,
\begin{multline*}
  \esp{ \<  \nabla P_\tau f_{N},\, \tS^m\>_{\cm}}\\
 \begin{aligned}
   &= \ \esp{\sum_{a\in \cA^m} X_a \ \left\langle {\nabla P_\tau f_N (\tS^m),h^m_a}\right \rangle_{\cm}}\\
&=\  \esp{\sum_{a\in \cA^m} X_a \ \left\langle \nabla P_\tau f_N (\tS^m)-\nabla
    P_\tau f_N (\tS^m_{\neg a}),\, h^m_a \right\rangle_{\cm}}\\
&=\esp{\sum_{a\in \cA^m} X_a^2\ \<\nabla^{(2)} P_\tau
                                         f_N (\tS^m_{\neg a}),\, h^m_a\otimes h^m_a\>_{\cm^{\otimes 2}}}\\
&  +                                      \  \esp{\sum_{a\in \cA}
                                         X_a^2\int_0^1 \
                                         \< \nabla^{(2)}P_\tau f_N (\tS^m_{\neg
                                           a}+r\, X_a h^m_a) -\nabla^{(2)}P_\tau f_N (\tS^m_{\neg
                                           a}) , h^m_a{}^{\otimes 2}\>_{\cm^{\otimes 2}}\kern-5pt\dif
                                         r},
                                       \end{aligned}
\end{multline*}
according to the Taylor formula. Since  $\esp{X_a^2}=1$, we have
\begin{multline*}
  \esp{\sum_{a\in \cA^m} X_a^2\ \<\nabla^{(2)} P_\tau
                                         f_N (\tS^m_{\neg a}),\, h^m_a\otimes h^m_a\>_{\cm^{\otimes 2}}}\\=
 \esp{\sum_{a\in \cA^m} \<\nabla^{(2)} P_\tau
                                         f_N (\tS^m_{\neg a}),\, h^m_a\otimes h^m_a\>_{\cm^{\otimes 2}}}.
\end{multline*}
The result follows by difference.
\end{proof}
The main difficulty and then the main contribution of this paper is to find an
estimate of
\begin{equation*}
\sup_{v\in \V^{m}} \<\nabla^{(2)} P_\tau f_N (v)-\nabla^{(2)} P_\tau f_N (v+ \varepsilon h_{a}^{m}),\, h^m_a\otimes h^m_a\>_{\cm^{\otimes 2}},
\end{equation*}
for any $\varepsilon.$
\begin{theorem}
  \label{thm:majoModulusOfContinuity}
 There exists a constant $c$ such that for any $\tau >0$, for any $v\in \V^{m}$, for any $f\in \Lip(\fs_{\eta,p})$,
\begin{multline}
  \label{eq:_donsker:2-l}
  \left| \left\langle\nabla^{(2)}P^{m}_\tau
     f_N (v+ \varepsilon h^m_a)-\nabla^{(2)}P^{m}_\tau
     f_N (v),\,h^m_a\otimes h^m_a\right\rangle_{\cm^{\otimes 2}}\right|\\
   \le
	c \, \frac{e^{-5\tau/2}}{\beta_{\tau/2}^2}\, \varepsilon N^{\eta-\frac{1}{2}}\sqrt{\frac{N^3}{m^3}}
\cdotp
\end{multline}
\end{theorem}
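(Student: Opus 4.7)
The plan is to represent $\nabla^{(2)}P_\tau f_N$ by mixing one commutation with one Gaussian integration by parts, exploiting the split $P_\tau=P_{\tau/2}\circ P_{\tau/2}$. Since $f_N=f\circ\pi^N$ is Lipschitz on $\ws_{\eta,p}$, the first derivative satisfies $\nabla P_{\tau/2}f_N=e^{-\tau/2}P_{\tau/2}(\nabla f_N)$; then $P_{\tau/2}(\nabla f_N)$ is smooth, so a further derivative can be taken via the Mehler IBP. This yields the representation
\begin{equation*}
\langle\nabla^{(2)}P_\tau f_N(x),\,h^m_a\otimes h^m_a\rangle=\frac{e^{-3\tau/2}}{\beta_{\tau/2}}\int P_{\tau/2}\Phi\bigl(e^{-\tau/2}x+\beta_{\tau/2}y\bigr)\,\delta h^m_a(y)\,d\mu_{\eta,p}(y),
\end{equation*}
where $\Phi(z)=\langle\nabla f_N(z),h^m_a\rangle_{\cm}=\langle\nabla f(\pi^N z),\pi^N h^m_a\rangle_{\cm}$. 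The difference $\nabla^{(2)}P_\tau f_N(v+\varepsilon h^m_a)-\nabla^{(2)}P_\tau f_N(v)$ is then handled by the fundamental theorem of calculus in direction $h^m_a$ (producing a factor $\varepsilon e^{-\tau/2}$) together with another Mehler IBP applied to $\langle\nabla P_{\tau/2}\Phi,\,h^m_a\rangle$.

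The essential new observation, which generates the factor $(N/m)^{3/2}$, is that $f_N$ factors through $\pi^N$, so both $\Phi$ and $P_{\tau/2}\Phi$ depend on their argument only through its $\pi^N$-component. Consequently, in each of the two Mehler IBPs the Skorokhod integral against $\delta h^m_a$ can be replaced by one against $\delta(\pi^N h^m_a)$, because the orthogonal part $\delta((I-\pi^N)h^m_a)$ is independent under $\mu_{\eta,p}$ of the $\pi^N$-variables and integrates to zero. Applying Cauchy--Schwarz in each IBP then produces a factor $\|\pi^N h^m_a\|_{\cm}=\sqrt{N/m}$ instead of the trivial $\|h^m_a\|_{\cm}=1$. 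The third factor of $\sqrt{N/m}$, together with the prefactor $N^{\eta-1/2}$, comes from the pointwise bound $|\Phi|\le\|\pi^N h^m_a\|_{\ws_{\eta,p}}$: a direct estimate refining Lemma~\ref{lem:normeHDansDual}, using that $\pi^N h^m_a=(N/\sqrt m)\,h_{b_2/N,(b_2+1)/N}\,e_{b_1}$ has slope $N/\sqrt m$ on an interval of length $1/N$ and sup-norm $1/\sqrt m$, gives $\|\pi^N h^m_a\|_{\ws_{\eta,p}}\le c\,N^\eta/\sqrt m=c\,N^{\eta-1/2}\sqrt{N/m}$.

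Collecting the semigroup factors ($\frac{e^{-3\tau/2}}{\beta_{\tau/2}}$ from the outer representation, $e^{-\tau/2}$ from the FTC, and $\frac{e^{-\tau/2}}{\beta_{\tau/2}}$ from the inner IBP) yields $\frac{e^{-5\tau/2}}{\beta_{\tau/2}^2}$, and multiplying by $\varepsilon$, by the three $\sqrt{N/m}$'s, and by the $N^{\eta-1/2}$ produces the right-hand side of~\eqref{eq:_donsker:2-l}. The main obstacle is the $\pi^N$-projection trick in the two Gaussian IBPs: without it one only obtains a bound of order $N^\eta/\sqrt m$ in place of $N^{\eta+1}/m^{3/2}$, and since $N<m$ this weaker estimate is insufficient for Theorem~\ref{propA2} to deliver the desired rate.
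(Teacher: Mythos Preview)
Your argument is correct and reaches the stated bound, but it follows a genuinely different path from the paper's own proof.

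The paper works throughout with the finite-dimensional semigroup driven by $B^{m}$ and $\hat B^{m}$, using the two-Brownian representation
\begin{equation*}
\langle\nabla^{(2)}P^{m}_\tau f_N(v),h^{m}_{a}\otimes h^{m}_{a}\rangle
=\frac{e^{-3\tau/2}}{\beta_{\tau/2}^{2}}\,
\esp{f_N\bigl(w_\tau(v,B^{m},\hat B^{m})\bigr)\,\delta h^{m}_{a}(B)\,\delta h^{m}_{a}(\hat B)}.
\end{equation*}
Since $f_N$ factors through $\pi^{N}$, the paper conditions on $\pi^{N}B^{m}$ and $\pi^{N}\hat B^{m}$; the two factors $(N/m)^{1/2}$ are then extracted from a variance bound on $\esp{\delta h^{m}_{a}(B)\mid \pi^{N}B^{m}}$. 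Because in general $N\nmid m$, the vector $\pi^{N}B^{m}$ is \emph{not} $B^{N}$, and this conditional expectation is only accessible through the tridiagonal covariance matrix $\Gamma$ of $(G^{m,N}_b)_b$; the paper proves $\|\Gamma^{-1}\|_\infty\le 2$ and deduces $\var{\esp{\delta h^{m}_{a}\mid\pi^{N}B^{m}}}\le cN/m$ (Lemmas~\ref{lem:cov} and~\ref{lem-maj-var-esp}). The third factor comes, as in your proof, from $\|\pi^{N}h^{m}_{a}\|_{\ws_{\eta,p}}$, via the Lipschitz increment of $f_N$ rather than via the pointwise bound on $\Phi$.

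Your route sidesteps this matrix analysis entirely: by running the two Mehler integrations by parts against the full Wiener measure $\mu_{\eta,p}$, you can use the exact orthogonality $(I-\pi^{N})h^{m}_{a}\perp \V^{N}$ in $I_{1,2}$, so that $\delta\bigl((I-\pi^{N})h^{m}_{a}\bigr)$ is independent of the $\pi^{N}$-variables and drops out; Cauchy--Schwarz then produces $\|\pi^{N}h^{m}_{a}\|_{\cm}\le\sqrt{N/m}$ directly. This is more elementary and conceptually cleaner; the price is that one has to accept working on the infinite-dimensional Wiener space (or, equivalently, impose $N\mid m$), whereas the paper's detour through $B^{m}$ keeps everything finite-dimensional at the cost of the covariance-matrix lemma. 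One small caveat: your formula $\pi^{N}h^{m}_{a}=(N/\sqrt m)\,h_{b_{2}/N,(b_{2}+1)/N}e_{b_{1}}$ holds only when the support of $h^{m}_{a}$ lies in a single $\dyad^{N}$-interval; when it straddles a boundary there are two terms, but the bounds $\|\pi^{N}h^{m}_{a}\|_{\cm}\le\sqrt{N/m}$ and $\|\pi^{N}h^{m}_{a}\|_{\ws_{\eta,p}}\le c\,N^{\eta-1/2}\sqrt{N/m}$ survive with an absolute constant.
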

\begin{proof}[Proof of Theorem~\ref{thm:majoModulusOfContinuity}]
  We know from \cite{shih_steins_2011,coutin_convergence_2017} that we have the
  following representation: for any $h\in \cm$,
 \begin{equation}\label{eq:1}
      \left\langle\nabla^{(2)}P^{m}_\tau
    f(v),\,h\otimes
    h\right\rangle_{\cm^{\otimes 2}}\\=\frac{e^{-3\tau/2}}{\beta_{\tau/2}^2}\,
 \esp{ f\Bigl(w_{\tau}(v,B^{m},\bB^{m})\Bigr) \div h(B)\div h(\bB)}
\end{equation}
where
 \begin{equation*}
    w_{\tau}(v,y,z)=e^{-\tau/2}(e^{-\tau/2}v+\beta_{\tau/2}y)+\beta_{\tau/2}z
  \end{equation*}
and $\bB$ is an independent copy of $B$. Since the map $v$ is linear with
respect to its three arguments, 
\begin{equation*}
  f_{N}\Bigl(w_{\tau}(v,\,B^{m},\,\bB^{m})\Bigr)=f_{N}\Bigl(w_{\tau}(\pi^{N}v,\,\pi^{N}B^{m},\,\pi^{N}\bB^{m})\Bigr).
\end{equation*}
Hence,
\begin{multline}\label{eq_core:15}
\left( \frac{e^{-3\tau/2}}{\beta_{\tau/2}^2} \right)^{-1}   \left\langle\nabla^{(2)}P^{m}_\tau
    f_{N}(v),\,h\otimes
    h\right\rangle_{\cm^{\otimes 2}}\\= \esp{ f_{N}\Bigl(w_{\tau}(\pi^{N}v,\pi^{N}B,\pi^{N}\bB)\Bigr) \esp{\div h(B)\,|\, \pi^{N}B^{m}}\esp{\div h(\bB)\,|\,\pi^{N}\bB^{m}}}
\end{multline}
From Lemma \ref{lem-maj-var-esp}, we know that 
\begin{equation}
  \label{eq:varianceEspCond}
  \text{Var}\left(\esp{\div h(\bB)\,|\,\pi^{N}\bB^{m}}\right)\le c\ \frac{N}{m}
\end{equation}
for $m>8\,N$, and the same holds for the other conditional expectation. Use Cauchy-Schwarz inequality in \eqref{eq_core:15} and take
\eqref{eq:varianceEspCond} into account to 
obtain
\begin{multline}\label{eq_core:9}
 \left( \frac{e^{-3\tau/2}}{\beta_{\tau/2}^2} \right)^{-1}   \left| \left\langle\nabla^{(2)}P^{m}_\tau
     f_N (v+ \varepsilon h^m_a)-\nabla^{(2)}P^{m}_\tau
     f_N (v),\,h^m_a\otimes h^m_a\right\rangle_{\cm^{\otimes 2}}\right| \\ 
 \le c \, \left(\frac{N}{m}\right)^{2}
 \left\|w_{\tau}(\pi^{N}v,\pi^{N}B^{m},\pi^{N}\bB^{m})-w_{\tau}(\pi^{N}v+\varepsilon
   \pi^{N}h_{a}^{m},\pi^{N}B^{m},\pi^{N}\bB^{m})\right\|_{\ws_{\eta,p}}\\
 = ce^{-\tau} \varepsilon \, \left(\frac{N}{m}\right)^{2} \left\|\pi^{N}h_{a}^{m}\right\|_{\ws_{\eta,p}}
\end{multline}
since $f_{N}$ belongs to $\Lip_{1}(\ws_{\eta,p})$.
Furthermore,
\begin{equation*}
  \pi^N( h_a^m)= \sum_{b \in {\mathcal A}^N}  \<h_a^m ,h_b^N\>_{\cm} \,h_b^N.
\end{equation*}
We already know that 
\begin{equation*}
  0 \le  \<h_a^m,h_b^N\>_{\cm}\le  \sqrt{\frac{N}{m}}
\end{equation*}
and that at most two terms $\<h_a^m,h_b^N\>_{\cm}$ are non zero. Moreover, according to Lemma~\ref{lem:normeHDansDual}
\begin{equation*}
  \|h_b^N\|_{\fs_{\eta,p}}\le  c\, N^{\eta-\frac{1}{2}}.
\end{equation*}
Thus,
\begin{equation}\label{majnormepiNh}
\| \pi^N( h_a^m) \|_{\ws_{\eta,p}}\le  c \,\sqrt{\frac{N}{m}}\ N^{\eta-\frac{1}{2}}.
\end{equation}
Plug estimation \eqref{majnormepiNh}  into estimation \eqref{eq_core:9}  yields
estimate \eqref{eq:_donsker:2-l}.
\end{proof}
According to \eqref{eq:_donsker:2-l} and
Lemma~\ref{lem:reductionToContinuityModulus}, since the cardinality of $\A^{m}$
is $dm$, we obtain the following theorem.
\begin{theorem}
  \label{thm:tauAlInfini}
  If  $X_a$ belongs to $L^p$, for any ${\tau_0}>0$, there exists $c>0$ such that 
  \begin{equation}\label{eq_donsker_wiener:5}
 \esp{ \int_{\tau_0}^\infty
  L P_\tau f_{N}(\tS^m)\dif \tau } \le c\, \|X\|_{L^{p}}\ \frac{N^{1+\eta}}{\sqrt{m}}
	\, \int_{\tau_0}^\infty \frac{e^{-5\tau/2}}{1-e^{-\tau/2}}\dif \tau.
  \end{equation}
\end{theorem}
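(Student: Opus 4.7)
The strategy is to bound $\esp{LP_\tau f_N(\tS^m)}$ for each fixed $\tau\ge \tau_0$ by combining the Malliavin-type decomposition of Lemma~\ref{lem:reductionToContinuityModulus} with the pointwise modulus estimate of Theorem~\ref{thm:majoModulusOfContinuity}, and then to integrate in $\tau$ via Fubini.

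\textbf{Step 1 (applicability of the modulus estimate).} Lemma~\ref{lem:reductionToContinuityModulus} writes $\esp{LP_\tau f_N(\tS^m)}$ as a sum over $a\in\A^m$ of two families of inner products of the form $\langle \nabla^{(2)}P_\tau f_N(v+\varepsilon h_a^m)-\nabla^{(2)}P_\tau f_N(v),\,h_a^m\otimes h_a^m\rangle_{\cm^{\otimes 2}}$ evaluated at $v=\tS^m_{\neg a}=\sum_{b\ne a}X_b h_b^m$ and shift $\varepsilon\in\{X_a,\,rX_a\}$. The crucial structural observation is that $\tS^m_{\neg a}$ lies in $\V^m$ by construction, so Theorem~\ref{thm:majoModulusOfContinuity} applies pathwise to each summand.

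\textbf{Step 2 (summation and moment bounds).} Substituting the pointwise bound~\eqref{eq:_donsker:2-l}, taking absolute values, and summing over $a\in\A^m$ gives
\begin{equation*}
  \bigl|\esp{LP_\tau f_N(\tS^m)}\bigr|\le c\,\frac{e^{-5\tau/2}}{\beta_{\tau/2}^2}\,N^{\eta-\tfrac12}\sqrt{\frac{N^3}{m^3}}\sum_{a\in\A^m}\esp{|X_a|+X_a^2\,|X_a|}.
\end{equation*}
Since $|\A^m|=dm$ and the $X_a$ are i.i.d.\ copies of $X$, the sum is bounded by $c\,dm\,(\esp{|X|}+\esp{|X|^3})$. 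Under $p\ge 3$, Jensen's inequality controls both moments by a power of $\|X\|_{L^p}$. The geometric factor reduces to $dm\cdot N^{\eta-1/2}\sqrt{N^3/m^3}=d\,N^{1+\eta}/\sqrt{m}$, matching the advertised scaling.

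\textbf{Step 3 (integration in $\tau$).} Using $\beta_{\tau/2}^2=(1-e^{-\tau/2})(1+e^{-\tau/2})\ge 1-e^{-\tau/2}$ gives $1/\beta_{\tau/2}^2\le 1/(1-e^{-\tau/2})$. Applying Fubini to interchange $\esp{\cdot}$ with $\int_{\tau_0}^\infty\dif\tau$ then yields~\eqref{eq_donsker_wiener:5}. There is no serious analytic obstacle beyond the ingredients already in hand: everything difficult is encapsulated in Theorem~\ref{thm:majoModulusOfContinuity}. The two points requiring care are verifying $\tS^m_{\neg a}\in\V^m$ (immediate from the definition of $\V^m$) and handling the cubic moment of $X$ arising from the $X_a^2\,\varepsilon$ term, which is harmless under the standing assumption $p\ge 3$.
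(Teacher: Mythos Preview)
Your proof is correct and follows exactly the route the paper takes: the paper's own argument is the one-line remark that the bound follows ``according to \eqref{eq:_donsker:2-l} and Lemma~\ref{lem:reductionToContinuityModulus}, since the cardinality of $\A^{m}$ is $dm$''. You have simply unpacked this, including the check that $\tS^m_{\neg a}\in\V^m$, the third-moment requirement on $X$, and the elementary inequality $\beta_{\tau/2}^{2}\ge 1-e^{-\tau/2}$, all of which the paper leaves implicit.
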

If we combine Lemma~
\ref{lem:voisDeZero} and \eqref{eq_donsker_wiener:5}, we get
\begin{multline*}
  \left| \esp{f_{N}(\tS^{m})}-\esp{f_{N}(B^{m})}  \right| \\
  \le c \|X\|_{L^{p}}\left( \sqrt{1-e^{-\tau_{0}}}+\frac{N^{1+\eta}}{\sqrt{m}}
	\, \int_{\tau_0}^\infty \frac{e^{-5\tau/2}}{1-e^{-\tau/2}}\dif \tau \right).
\end{multline*}
Optimizing with respect to $\tau_{0}$ yields Theorem~\ref{propA2}.

It remains to prove \eqref{eq:varianceEspCond}. For the sake of simplicity, we give the proof for $d=1$. The
  general situation is similar but with more involved notations.

We recall that 
\begin{equation*}
\pi^N(B^m)= \sum_{b=0}^{N-1} G_{b}^{m,N} h_{b}^N .
\end{equation*}
where 
\begin{equation}\label{def-z-appendice}
G_b^{m,N} =\sum_{ a=0}^{m-1} \<h_{a}^m,h_{b}^N\>_{\cm} \div (h_{a}^m).
\end{equation}
 
\begin{lemma}
  \label{lem:cov}
  The covariance matrix $\Gamma$ of the Gaussian vector $(G_{b}^{m,N},\,
  b=0,\cdots,N-1)$ is invertible and satisfies 
  \begin{equation}\label{eq_core:18}
    \|\Gamma^{-1}\|_{\infty }\le 2.
  \end{equation}
\end{lemma}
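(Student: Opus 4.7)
The plan is to compute the entries of $\Gamma$ explicitly, notice that it is tridiagonal with almost-constant diagonal, and close via strict row diagonal dominance. First, since $(h_a^m)_{a=0,\dots,m-1}$ is orthonormal in $\cm$, the Wiener integrals $(\div(h_a^m))_a$ are i.i.d.\ standard Gaussian, so the definition \eqref{def-z-appendice} of $G_b^{m,N}$ gives
\begin{equation*}
  \Gamma_{b,b'} = \sum_{a=0}^{m-1} \<h_a^m,h_b^N\>_{\cm}\<h_a^m,h_{b'}^N\>_{\cm} = \<\pi^m h_b^N,\, h_{b'}^N\>_{\cm}.
\end{equation*}
Writing $J_a=[a/m,(a+1)/m)$ and $I_b=[b/N,(b+1)/N)$, a direct computation on $\dot h_a^m=\sqrt{m}\,\car_{J_a}$ and $\dot h_b^N=\sqrt{N}\,\car_{I_b}$ gives $\<h_a^m,h_b^N\>_{\cm}=\sqrt{mN}\,|J_a\cap I_b|$.

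Because each $J_a$ has length $1/m<1/N$, it can straddle at most one boundary of the coarse partition $\dyad^N$. Hence $\<h_a^m,h_b^N\>\<h_a^m,h_{b'}^N\>$ vanishes whenever $|b-b'|\ge 2$, so $\Gamma$ is tridiagonal. For the diagonal entries I would use the Pythagorean identity $\Gamma_{b,b}=\|\pi^m h_b^N\|_{\cm}^{2}=1-\|h_b^N-\pi^m h_b^N\|_{\cm}^{2}$ and the fact that, in the $L^2$-projection of $\dot h_b^N$ onto step functions over the fine grid, every $J_a$ that is fully inside or fully outside $I_b$ is reproduced exactly. The $L^2$ error is therefore concentrated on the (at most two) straddling intervals: on such a $J_a$, writing $\alpha = m|J_a\cap I_b|\in(0,1)$, a one-line $L^2$ computation gives a local error of $N\alpha(1-\alpha)/m\le N/(4m)$, whence $\Gamma_{b,b}\ge 1-N/(2m)$. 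The same local calculation applied to $\<\pi^m h_b^N,h_{b+1}^N\>$ shows $|\Gamma_{b,b+1}|=N\beta(1-\beta)/m\le N/(4m)$, where $\beta$ is the analogous ratio for the unique fine interval straddling the shared boundary $(b+1)/N$.

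To conclude, I would combine the estimates: provided $m\ge 2N$ (which is implied by the hypothesis $m>8N$ used in the applications of \eqref{eq:varianceEspCond}),
\begin{equation*}
  \Gamma_{b,b}-\sum_{b'\ne b}|\Gamma_{b,b'}|\;\ge\; 1-\frac{N}{2m}-2\cdot\frac{N}{4m}\;=\;1-\frac{N}{m}\;\ge\;\frac{1}{2}\cdotp
\end{equation*}
Strict row diagonal dominance then forces $\Gamma$ to be invertible, and the classical bound obtained by testing $\Gamma(\Gamma^{-1}y)$ at the coordinate realizing $\|\Gamma^{-1}y\|_\infty$ yields the operator-norm estimate $\|\Gamma^{-1}\|_\infty\le 2$. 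The only mildly delicate point is the $\alpha(1-\alpha)\le 1/4$ bookkeeping on the straddling fine intervals, which handles uniformly the case where $m/N$ is not an integer; everything else reduces to elementary linear algebra.
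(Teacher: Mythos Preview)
Your argument is correct and follows the same overall architecture as the paper: compute the entries of $\Gamma$, observe it is tridiagonal, bound the diagonal from below and the off-diagonal from above, and conclude by a dominance argument. The differences are in execution rather than in strategy.

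The paper estimates the entries more crudely: it counts that at least $m/N-3$ of the coefficients $\<h_a^m,h_b^N\>_{\cm}$ equal $\sqrt{N/m}$ exactly, getting $\Gamma_{b,b}\ge 1-3N/m$, and bounds each off-diagonal entry by $N/m$ using only $\<h_a^m,h_b^N\>_{\cm}\le\sqrt{N/m}$. It then inverts via a Neumann series, writing $\Gamma=D(\Id+D^{-1}S)$ with $D$ the diagonal part and $\|D^{-1}S\|_\infty\le\tfrac{8}{3}\tfrac{N}{m}$, which forces the assumption $m>8N$. Your Pythagorean computation $\Gamma_{b,b}=1-\|h_b^N-\pi^m h_b^N\|_{\cm}^2$ together with the local $\alpha(1-\alpha)\le 1/4$ bound on the straddling fine intervals is sharper, yielding $\Gamma_{b,b}\ge 1-N/(2m)$ and $|\Gamma_{b,b\pm1}|\le N/(4m)$; the direct Varah-type bound from strict row diagonal dominance then gives $\|\Gamma^{-1}\|_\infty\le 2$ already for $m\ge 2N$. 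So your route buys better constants and a weaker hypothesis, while the paper's Neumann-series step is perhaps more transparent about where the factor $2$ comes from. Both are entirely elementary once the tridiagonal structure is noted.
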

\begin{proof}
  Since the $h_{a}^{m}$ are orthogonal in $L^{2}$, for any $b,c\in\{0,\cdots,N-1\}, $
  \begin{equation}\label{eq_core:19}
    \Gamma_{b,c}=\sum_{a=0}^{m-1} \<h_{a}^m,h_{b}^N\>_{\cm} \<h_{a}^m,h_{c}^N\>_{\cm} .
  \end{equation}
  Since a sub-interval of $\dyad_{m}$ intersects at most two sub-intervals of
  $\dyad_{N}$, the matrix $\Gamma$ is tridiagonal.
Furthermore, we know that
  \begin{equation}\label{eq_core:20}
    0\le  \<h_{a}^m,h_{b}^N\>_{\cm} \le \sqrt{\frac{N}{m}},
  \end{equation}
  and for each $b$, there are at least $(N/m-3)$ terms of this kind which are
  equal to $(N/m)^{-1/2}$. Hence,
  \begin{equation*}
    \Gamma_{b,b}\ge (\frac{m}{N}-3)(\sqrt{\frac{N}{m}})^{2}\ge \frac{3}{4}\cdotp
  \end{equation*}
  Since $\Gamma$ is tridiagonal, this implies that it is invertible. Moreover, let $D$ be the
  diagonal matrix extracted from $\Gamma$. We have proved that
  $\|D\|_{\infty}\ge 3/4.$

  For $|b-c|=1$, there is at most one term of the sum \eqref{eq_core:19} which yields a non zero
  scalar product, hence
  \begin{equation*}
    |\Gamma_{b,c}|\le \frac{N}{m}\cdotp
  \end{equation*}
  Set  $S=\Gamma-D$. The matrix $D^{-1}S$ has at most two non null entries and
  \begin{equation*}
    \|D^{-1}S\|_{\infty}\le \frac{8}{3}\frac{N}{m}\le \frac{1}{3},
  \end{equation*}
  if $m>8N$.
By iteration, we get for any $k\ge 1$,
\begin{equation*}
  \|(D^{-1}S)^{k}\|_{\infty}\le \frac{1}{3^{k}}\cdotp
\end{equation*}
Moreover,
\begin{equation*}
  \sum_{k=0}^{\infty} (-D^{-1}S)^{k}=(\Id+D^{-1}S)^{-1}=\Gamma^{-1}D.
\end{equation*}
Thus,
\begin{equation*}
  \|\Gamma^{-1}\|_{\infty}\le  \frac{4}{3}\sum_{k=0}^{\infty}  \frac{1}{3^{k}} =2.
\end{equation*}
The proof is thus complete.
\end{proof}
\begin{lemma}\label{lem-maj-var-esp} There exists a constant $c$ which depends only on the dimension~$d$ such that for
  all $m,N$ with  $m >8N$, for any  $a \in {\mathcal A}^N$
\begin{align*}
\var{\esp{ \div (h_a^m) \/ \pi^N(B^m) }} \le  \,c\,\frac{N}{m}\cdotp
\end{align*}
\end{lemma}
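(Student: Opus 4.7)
The plan is to leverage the joint Gaussianity of $\div(h_a^m)$ and the vector $G=(G_b^{m,N})_{b=0,\dots,N-1}$, together with the uniform estimate $\|\Gamma^{-1}\|_{\infty}\le 2$ from Lemma~\ref{lem:cov}. Since $\pi^{N}(B^{m})=\sum_{b}G_b^{m,N}h_b^N$ and the family $(h_b^N)_b$ is linearly independent, the $\sigma$-algebras generated by $\pi^N(B^m)$ and by $G$ coincide; I may therefore condition on $G$.

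Because $\Gamma$ is invertible by Lemma~\ref{lem:cov}, the Gaussian projection formula yields
$$\espcond{\div(h_a^m)}{\pi^N(B^m)}=v^{\top}\Gamma^{-1}G,\qquad v_b:=\esp{\div(h_a^m)\,G_b^{m,N}}=\<h_a^m,h_b^N\>_{\cm},$$
where the identification of $v_b$ combines the Wiener--Itô isometry with definition~\eqref{def-z-appendice} and the orthonormality of $(h_a^m)_a$ in $\cm$ (so that only the $c=a$ term survives in the sum defining $G_b^{m,N}$). Consequently
$$\var{\espcond{\div(h_a^m)}{\pi^N(B^m)}}=v^{\top}\Gamma^{-1}v.$$

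To bound this quadratic form I would exploit the sparsity of $v$: for fixed $a$, the support interval $[a/m,(a+1)/m]$ of $h_a^m$ meets at most two intervals of $\dyad^{N}$, so at most two entries of $v$ are non-zero, each bounded by $\sqrt{N/m}$ in absolute value thanks to~\eqref{eq_core:20}. Hence $\|v\|_{1}\le 2\sqrt{N/m}$ and $\|v\|_{\infty}\le\sqrt{N/m}$, and chaining the elementary estimate $|v^{\top}w|\le \|v\|_{1}\|w\|_{\infty}$ with Lemma~\ref{lem:cov} gives
$$v^{\top}\Gamma^{-1}v\;\le\;\|v\|_{1}\,\|\Gamma^{-1}v\|_{\infty}\;\le\;\|v\|_{1}\,\|\Gamma^{-1}\|_{\infty}\,\|v\|_{\infty}\;\le\;4\,\frac{N}{m}\cdotp$$

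The substantive input is already packaged into Lemma~\ref{lem:cov}: once the uniform control $\|\Gamma^{-1}\|_{\infty}\le 2$ on the tridiagonal, diagonally dominant matrix $\Gamma$ is granted (proved there via a Neumann-series expansion around its diagonal part, which is where the assumption $m>8N$ is used), the only remaining ingredient is the observation that each column $v$ indexed by a fixed $a$ is both sparse and small in $\ell^{\infty}$. Hence I do not foresee a genuine obstacle beyond what is already overcome in Lemma~\ref{lem:cov}. The extension to $d\ge 2$ is purely notational: the covariance of $G$ becomes block-diagonal across the $d$ coordinate directions $e_{a_{1}}$, and the scalar argument applies blockwise with the same constant up to a factor depending only on~$d$.
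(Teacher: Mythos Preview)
Your proof is correct and follows essentially the same route as the paper: both derive the Gaussian regression formula $\espcond{\div(h_a^m)}{\pi^N(B^m)}=v^{\top}\Gamma^{-1}G$ with $v_b=\<h_a^m,h_b^N\>_{\cm}$, identify the variance as $v^{\top}\Gamma^{-1}v$, and bound it using the sparsity of $v$ together with $\|\Gamma^{-1}\|_\infty\le 2$ from Lemma~\ref{lem:cov}. Your presentation via the quadratic-form estimate $|v^{\top}\Gamma^{-1}v|\le\|v\|_1\,\|\Gamma^{-1}\|_\infty\,\|v\|_\infty$ is slightly more compact than the paper's, which first isolates the coefficients $C_{a,b}^{m,N}=(\Gamma^{-1}v)_b$ and then uses $\var{\cdot}=\sum_b C_{a,b}^{m,N}v_b$, but the ingredients and the final bound are the same.
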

\begin{proof}  
Using the  framework of Gaussian vectors, for all $a \in \{0,\cdots,m-1\}$
\begin{equation}\label{def-coeff}
\esp{ \div (h_a^m) \/ \pi^N(B^m)}= \sum_{b \in {\mathcal A}^N} C^{m,N}_{a,b} G_b^{m,N}.
\end{equation}
For any $c\in \{0,\cdots,N-1\}$, on the one hand
\begin{multline*}
  \esp{\esp{ \div (h_a^m) \/ \pi^N(B^m)}\ G_{c}}=\sum_{b=0}^{N-1} \sum_{\tau=0}^{m-1} C_{a,b}^{m,N} \, \<h_{\tau}^{m},\ h_{b}^{N}\>_{\cm}\<h_{\tau}^{m},\ h_{c}^{N}\>_{\cm}\\
  =\sum_{b=0}^{N-1} C_{a,b}^{m,N} \Gamma_{b,c}.
\end{multline*}
and on the other hand,
\begin{equation*}
   \esp{\esp{ \div (h_a^m) \/ \pi^N(B^m)}\ G_{c}}=\esp{ \div (h_a^m) \, G_{c}}=\<h_{a}^{m},\ h_{c}^{N}\>_{\cm}.
 \end{equation*}
 This means that
 \begin{equation*}
   \left( \<h_{a}^{m},\ h_{c}^{N}\>_{\cm},\, c=0,\cdots,N-1 \right) =  \left( C_{a,b}^{m,N},\, b=0,\cdots,N-1 \right)\Gamma.
 \end{equation*}
 In view of Lemma~\ref{lem:cov}, this entails that
 \begin{equation*}
   \left( C_{a,b}^{m,N},\, b=0,\cdots,N-1 \right)= \left( \<h_{a}^{m},\ h_{c}^{N}\>_{\cm},\, c=0,\cdots,N-1 \right)\Gamma^{-1}.
 \end{equation*}
 Once again we invoke \eqref{eq_core:20} and the fact that at most two of the
 terms $\<h_{a}^{m},\ h_{c}^{N}\>_{\cm}$  are non zero for a fixed $a$, to deduce
 that
 \begin{equation}\label{eq_core:21}
   \sup_{a,b}|C_{a,b}^{m,N}|\le 2 \|\Gamma^{-1}\|_{\infty} \sqrt{\frac{N}{m}}=4 \sqrt{\frac{N}{m}}\cdotp
 \end{equation}
Now then, according to  the very definition of the conditional expectation
\begin{align*}
\var{\esp{ \div (h_a^m) | \pi^N(B^m) }}&=\esp{ \div (h_a^m)\ \esp{ \div (h_a^m) | \pi^N(B^m)} }\\
&=\sum_{b=0}^{N-1} C^{m,N}_{a,b} \<h_a^m,h_{b}^{N}\>_{\cm}.
\end{align*}
Hence,
\begin{equation*}
  \var{\esp{ \div (h_a^m) | \pi^N(B^m) }} \le 2  \sup_{a,b}|C_{a,b}^{m,N}| \sqrt{\frac{N}{m}}\le 8\, \frac{N}{m},
\end{equation*}
according to \eqref{eq_core:21}.  The constant $8$ has to be modified when $d>1$.
\end{proof}

\end{document}